\documentclass[11pt]{amsart}
\usepackage{amsmath,amsfonts,amsthm,amssymb,amscd}
\usepackage{float,graphicx}
\usepackage{hyperref}
\usepackage{upgreek}
\usepackage{mathtools}
\usepackage{xcolor}
\usepackage{diagbox} 
\graphicspath{{images/}}
\usepackage[letterpaper,top=3cm,bottom=3cm,left=3cm,right=3cm,marginparwidth=1.75cm]{geometry}
\usepackage{comment}

\newtheorem{theorem}{Theorem}[section]
\newtheorem{lemma}[theorem]{Lemma}
\newtheorem{proposition}[theorem]{Proposition}

\theoremstyle{remark}
\newtheorem{remark}[theorem]{Remark}
\numberwithin{equation}{section}

\newcommand{\zm}[1]{\textcolor{black}{#1}}

\title{Blowup analysis for a quasi-exact 1D model of 3D Euler and Navier-Stokes}
\author{ Thomas Y. Hou and Yixuan Wang}
 \address{Applied and Computational Mathematics, Caltech, Pasadena, CA 91125. \newline Emails: hou@cms.caltech.edu, roywang@caltech.edu}

\date{\today}

\begin{document}

\maketitle
\begin{abstract}
        We study the singularity formation of a quasi-exact 1D model proposed by Hou-Li in \cite{hou2008dynamic}. This model is based on an approximation of the axisymmetric Navier-Stokes equations in the $r$ direction. The solution of the 1D model can be used to construct an exact solution of the original 3D Euler and Navier-Stokes equations if the initial angular velocity, angular vorticity, and angular stream function are linear in $r$. This model shares many intrinsic properties similar to those of the 3D Euler and Navier-Stokes equations. It captures the competition between advection and vortex stretching as in the 1D De Gregorio \cite{de1990one, de1996partial} model. We show that the inviscid model with weakened advection and smooth initial data or the original 1D model with H\"older continuous data develops a self-similar blowup. We also show that the viscous model with weakened advection and smooth initial data develops a finite time blowup. 
To obtain sharp estimates for the nonlocal terms, we perform an exact computation  for the low-frequency Fourier modes and  {extract damping in leading order estimates}  for the high-frequency modes using singularly weighted norms {in the energy estimates}. The analysis for the viscous case is more subtle since the viscous terms produce some instability if we just use singular weights. We establish the blowup analysis for the viscous model by carefully designing an energy norm that combines a singularly weighted energy norm and a {sum of} high-order Sobolev norms.
    \end{abstract}
\section{Introduction}

Whether the 3D incompressible Euler and Navier-Stokes equations can develop a finite time singularity from smooth initial data is one of the most outstanding open questions in nonlinear partial differential equations. An essential difficulty is that the vortex stretching term has a quadratic nonlinearity in terms of vorticity. A simplified 1D model was proposed by the Constantin-Lax-Majda model (CLM model for short) \cite{constantin1985simple} to capture the effect of nonlocal vortex stretching. The CLM model can be solved explicitly and can develop a finite time singularity from smooth initial data. Later on, De Gregorio incorporated the advection term into the CLM model to study the competition between advection and vortex stretching \cite{de1990one,de1996partial}, see  \zm{\cite{constantin1986} for singularity formulation in the distorted Euler equations with transport neglected and also   \cite{hou2008dynamic,lei2009stabilizing} for a related study on the stabilizing effect of advection for the 3D Euler and Navier-Stokes equations. There have been recent studies on the effect of advection and vortex stretching in other related models; see \cite{sarria2013blow} for the generalized inviscid Proudman-Johnson equation, \cite{elgindi2022strong} with a Riesz transform added to the vorticity formulation of 2D Euler equation, and \cite{miller2023finite} with advection term dropped in the vorticity formulation of 3D Euler equation.}  In \cite{okamoto2008generalization},
Okamoto, Sakajo, and Wunsch further introduced a parameter for the advection term to measure the relative strength of the advection in the De Gregorio model. These simplified 1D models have inspired  many subsequent studies. Interested readers may consult the excellent surveys \cite{constantin2007euler,gibbon2008three,kiselev2018,majda2002vorticity} and the references therein. {Very recently, the authors in \cite{huang2023self} established self-similar blowup for the whole family of gCLM models with $a\leq1$ using a fixed-point argument.}
On the other hand, these 1D scalar models are phenomenological in nature and cannot be used to recover the solution of the original 3D Euler equations. 

{For the line of research on the singularity formulation for the 3D Euler equations,}  Luo-Hou \cite{luo2013potentially-2} presented in 2014 convincing numerical evidence that the 3D axisymmetric Euler equations with smooth initial data and boundary develop a potential finite time singularity. 
Inspired by Elgindi's recent breakthrough for finite time singularity of the axisymmetric Euler with no swirl and $C^{1,\alpha}$ velocity \cite{ elgindi2021finite}, Chen and Hou proved the finite time blowup of the 2D Boussinesq and 3D Euler equations with $C^{1,\alpha}$ initial velocity and boundary \cite{chen2019finite2}. \zm{For other recent works on singularity formulation of 3D Euler with limited regularity, see also \cite{chen2023remarks,cordoba2023finite} for initial data that is smooth except at the origin, \cite{cordoba2023blow} for more smooth data but with a 
 $C^{1/2-\epsilon}$ force, and \cite{elgindi2021incompressible, elgindi2019finite} for settings with nonsmooth boundary.} Very recently, Chen and Hou proved stable and nearly self-similar blowup of the 2D Boussinesq and 3D Euler with smooth initial data and boundary using computer assistance \cite{chen2022stable}. 

In 2008, Hou and Li \cite{hou2008dynamic} proposed a new 1D model for the 3D axisymmetric Euler and Navier-Stokes equations. This model approximates the 3D axisymmetric Euler and Navier-Stokes equations along the symmetry axis based on an approximation in the $r$ direction. The solution of the 1D model can be used to construct an exact solution of the original 3D Navier-Stokes equations if the initial angular velocity, angular vorticity, and angular stream function are linear in $r$. This model shares many intrinsic properties similar to those of the 3D Navier-Stokes equations. Thus, it captures some essential nonlinear features of the 3D Euler and Navier-Stokes equations.
In the same paper  \cite{hou2008dynamic}, the authors proved the global regularity of the Hou-Li model by deriving a new Lyapunov functional, which captures the exact cancellation between advection and vortex stretching. 

The purpose of this paper is to study the singularity formation of a weak advection version of the Hou-Li model for smooth data. We introduce a parameter $a$ to characterize the relative strength between advection and vortex stretching, just like the gCLM model. Both inviscid and viscous cases are considered. We also prove the finite time singularity formation of the original inviscid Hou-Li model ($a=1$ and $\nu=0$) with $C^\alpha$ initial data. Inspired by the recent work of Chen \cite{chen2021regularity} for the De Gregorio model, we consider the case of $a < 1$ and treat $1-a$ as a small parameter. For the $C^\alpha$ initial data, we consider the original Hou-Li model with $a=1$ and $1-\alpha$ small. 
By using the dynamic rescaling formulation and analyzing the stability of the linearized operator around an approximate steady state of the original Hou-Li model ($a=1$), we prove finite time self-similar blowup. 

We follow a general strategy that we have established in our previous works \cite{chen2021finite,chen2019finite2}.
Establishing linear stability of the approximate steady state is the most crucial step in our blowup analysis. To obtain sharp estimates for the nonlocal terms, we carry out an exact computation for the low-frequency Fourier modes and {extract damping in leading order estimates}  for the high-frequency modes using singularly weighted norms {in the energy estimates}.  The blowup analysis for the viscous model is more subtle since the viscous terms do not provide damping and produce some bad terms if we use a singularly weighted norm. We establish the blowup analysis for the viscous model by carefully designing an energy norm that combines a singularly weighted energy norm and a {sum of} high-order Sobolev norms.

\subsection{Problem setting}
In \cite{hou2008dynamic},
Hou-Li introduced the following reformulation of the axisymmetric Navier-Stokes equation:
\begin{eqnarray}
\label{3d}
&&u_{1, t}+u^{r} u_{1, r}+u^{z} u_{1, z} =2 u_{1} \psi_{1, z}+\nu\Delta u_1\,, \\
&&\omega_{1, t}+u^{r} \omega_{1, r}+u^{z} \omega_{1, z} =\left(u_{1}^{2}\right)_{z}+\nu\Delta \omega_1\,, \\
&&-\left[\partial_{r}^{2}+(3 / r) \partial_{r}+\partial_{z}^{2}\right] \psi_{1} =\omega_{1}\,,
\end{eqnarray}
where $u_1 = u^\theta/r,\; \omega_1 = \omega^\theta,\; \psi_1 = \psi^\theta/r, $ and $u^\theta$, $\omega^\theta$, and $\psi^\theta$ are the angular velocity, angular vorticity, and angular stream function, respectively. 
By the well-known Caffarelli-Kohn-Nirenberg partial regularity result \cite{caffarelli1982partial}, the axisymmetric Navier-Stokes equations can develop a finite time singularity only along the symmetry axis $r=0$. To study the potential singularity or global regularity of the axisymmetric Navier-Stokes equations,
Hou-Li \cite{hou2008dynamic} proposed the following 1D model along the symmetry axis $r=0$:
\begin{equation}
\label{1d}
\begin{aligned}
u_{1, t}+2 \psi_1 u_{1, z} &=2 \psi_{1, z} u_1 +\nu u_{1,zz}\,, \\
\omega_{1, t}+2 \psi_1 \omega_{1, z} &=\left(u_1^{2}\right)_{z}+\nu \omega_{1,zz}\,, \\
- \psi_{1,zz} &=\omega_1\,.
\end{aligned}
\end{equation}
Such a reduction is exact in the sense that if ($\omega_1$, $u_1$, $\psi_1$) is an exact solution of the 1D model, we can obtain an exact solution of the 3D Navier-Stokes equations by using a constant extension in $r$. This corresponds to the case when the physical quantities $u^{\theta}=ru_1$, $\omega^{\theta}=r\omega_1$ are linear in $r$.
We assume that the solutions are periodic in $z$ on $[0,2\pi]$.
We already know from the original Hou-Li paper that this system is well-posed for $C^m$ initial data with $m \geq 1$.  In \cite{hou2008dynamic}, the authors also used the well-posedness of the Hou-Li model to construct globally smooth solutions to the 3D equations with large dynamic growth.

In two recent papers by the first author \cite{hou2022potential, hou2022potentially}, the author presented new numerical evidence that the 3D axisymmetric Euler and Navier-Stokes equations develop potential singular solutions at the origin. This new blowup scenario is very different from the Hou-Luo blowup scenario, which occurs on the boundary. In this computation, the author observed that the axial velocity $u^{z}=2 \psi_{1}+r \psi_{1, r}$ near the maximal point of $u_1$ is significantly weaker than $2\psi_1$. This is due to the fact that $\psi_1$ reaches the maximum at a position $r=r_\psi$ that is smaller than the position $r=r_u$ in which  $u_1$ achieves its maximum, i.e. $r_\psi < r_u$. Therefore $\psi_{1, r}$ is negative near the maximal position of $u_1$. Thus the axial velocity $u^z$ is actually weaker than $2 \psi_1$, which corresponds to $u^z |_{r=0}$. Thus, the original Hou-Li model along $r=0$ does not capture this subtle phenomenon, which is three-dimensional in nature. To gain some understanding of this potentially singular behavior, we introduce the following 1D weak advection model.
\begin{equation}
\label{1dwp}
\begin{aligned}
u_{ t}+2 a\psi u_{ z} &=2 u \psi_{ z}+\nu u_{zz}\,, \\
\omega_{ t}+2 a\psi\omega_{ z} &=\left(u^{2}\right)_{z}+\nu \omega_{zz}\,, \\
- \psi_{zz} &=\omega\,,
\end{aligned}
\end{equation}
where $a$ is a parameter that measures the relative strength of advection in the Hou-Li model.
\begin{remark}
For simplicity, we drop the subscript $1$ in the above weak advection model.
The proposed model \eqref{1dwp} in the inviscid case $\nu=0$  resembles the generalized Constantin-Lax-Majda model (gCLM) \cite{okamoto2008generalization} 
$$\omega_t+au\omega_x=u_x\omega\,,\qquad u_x=H\omega\,,$$
        where
     $$
H \omega(x)=\frac{1}{\pi} p.v. \int_{\mathbb{R}} \frac{\omega(y)}{x-y} \mathrm{~d} y
$$
is the Hilbert transform. They share similar structures of competition between advection and vortex stretching. The case when $a=1$ corresponds to the De Gregorio (DG) model.
We obtain an explicit steady-state to the inviscid Hou-Li model \eqref{1d} $(\omega,u,\psi)=(\sin x,\sin x, \sin x)$, similar to the steady state $(\omega,u)=(-\sin x,\sin x)$ of the DG model on $S^1$. Many of the results we present in this paper have analogies for the gCLM model; see in particular \cite{chen2021regularity,chen2021slightly}.
\end{remark}

\subsection{Main results}
We summarize the main results of the paper below and devote the subsequent sections to proving these results.
Our first result is on the finite-time blowup of the weak inviscid advection model; for its proof see Section \ref{linearest}  and \ref{nonlinearest}. 
\begin{theorem}
\label{t1}
For the weak advection model \eqref{1dwp} in the inviscid case $\nu=0$, there exists a constant $\delta>0$ such that for $a\in(1-\delta,1)$, the weak advection model \eqref{1dwp} develops a finite time singularity for some $C^{\infty}$ initial data. Moreover, there exists a self-similar profile $({\omega}_{\infty},{\omega}_{\infty},{\omega}_{\infty})$ corresponding to a blowup that is neither expanding nor focusing. More precisely, the blowup solution to \eqref{1dwp} has the form
$$\omega(x,t)=\frac{1}{1+c_{u,\infty}t}{\omega}_{\infty}\,,u(x,t)=\frac{1}{1+c_{u,\infty}t}u_{\infty}\,,\psi(x,t)=\frac{1}{1+c_{u,\infty}t}{\psi}_{\infty}\,,$$
for some negative constant $c_{u,\infty}$ with a blowup time given by $T= \frac{-1}{c_{u,\infty}}$.
\end{theorem}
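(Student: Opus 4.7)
The plan is to prove Theorem~\ref{t1} via dynamic rescaling and modulation around an approximate steady state close to the explicit profile $(\sin z, \sin z, \sin z)$ of the inviscid Hou-Li model at $a=1$ identified in the Remark, treating $\varepsilon := 1 - a$ as a small parameter. Because the asserted blowup is purely amplitude scaling (no spatial contraction or expansion), introduce
$$u(z,t) = C(\tau) U(z,\tau), \quad \omega(z,t) = C(\tau) \Omega(z,\tau), \quad \psi(z,t) = C(\tau) \Psi(z,\tau),$$
with $d\tau/dt = C(\tau)$, turning \eqref{1dwp} at $\nu=0$ into
$$U_\tau + c_u U + 2 a \Psi U_z = 2 U \Psi_z, \quad \Omega_\tau + c_u \Omega + 2 a \Psi \Omega_z = (U^2)_z, \quad -\Psi_{zz} = \Omega,$$
where $c_u(\tau) := (\log C)_\tau$ is a free modulation parameter. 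Finite-time blowup with the asserted form then reduces to showing $(U,\Omega,\Psi,c_u)(\tau) \to (U_\infty,\Omega_\infty,\Psi_\infty,c_{u,\infty})$ with $c_{u,\infty}$ of the sign that forces $C(\tau)\to\infty$ in finite physical time; integrating $\dot C = c_u C^2$ then yields $C(\tau) = 1/(1+c_{u,\infty}t)$ and the stated blowup time $T = -1/c_{u,\infty}$.

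The heart of the proof is linear stability. First construct an approximate steady state $(U_*, \Omega_*, \Psi_*, c_u^*)$ perturbatively in $\varepsilon$ by order-by-order Fourier matching starting from $(\sin z, \sin z, \sin z, 0)$, so that the residual is $O(\varepsilon^2)$ and $c_u^*$ is strictly of the correct sign (the extra term $-2\varepsilon\Psi U_z$ introduced by $a<1$ breaks the marginal $a=1$ balance in the blowup direction). Writing the perturbation $(\tilde U, \tilde \Omega, \tilde \Psi, \tilde c_u)$ and linearizing, I carry out energy estimates in a singularly weighted norm of the form
$$E[\tilde U, \tilde \Omega] := \int \frac{\tilde U^2 + \tilde \Omega^2}{(\sin z)^{2\mu}} \, dz + \text{higher-order Sobolev corrections},$$
for a suitable exponent $\mu$. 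The weight serves two distinct purposes: after integration by parts it converts the transport term $2 a \sin z \cdot \partial_z$ in the linearized operator into a strictly negative zeroth-order contribution of order $-(\mu - 1)\cos z/(\sin z)^{2\mu}$, producing damping for high-frequency modes; and it is finite only when $\tilde U, \tilde \Omega$ vanish to order $\mu$ at $z=0$, a condition preserved dynamically by choosing $\tilde c_u$ (and, if needed, a second normalization parameter) to kill the obstructing Taylor coefficients at the origin. For the finite-dimensional subspace of low Fourier modes not damped by the weight, compute the matrix of the linearized operator explicitly and verify strict negative definiteness modulo symmetry directions. The nonlocal Biot-Savart $\tilde \Psi = -\partial_z^{-2} \tilde \Omega$, which is smoothing but not diagonal under the weighted inner product, is handled by splitting low Fourier modes (exact computation) from high Fourier modes (Plancherel-type bounds).

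The nonlinear closure is a standard bootstrap. Quadratic interactions such as $\tilde U \tilde \Psi_z$, $\tilde \Psi \tilde U_z$, and $\tilde \Psi \tilde \Omega_z$ are controlled by Sobolev embedding together with the smoothing of $\partial_z^{-2}$ and contribute $O(E^{3/2})$; the residual from the approximate steady state contributes $O(\varepsilon^4)$; thus
$$\frac{d}{d\tau} E \leq -\eta E + C E^{3/2} + C \varepsilon^4$$
closes for all time once $\varepsilon = 1-a$ and the initial $E$ are sufficiently small, yielding $E(\tau) \lesssim \varepsilon^4$ uniformly in $\tau$. Passing $\tau \to \infty$ identifies the limit with $(U_*, \Omega_*, \Psi_*)$ and $c_{u,\infty} = c_u^* \neq 0$ with the correct sign, giving the asserted self-similar blowup after un-rescaling.

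The main obstacle is the linear stability step, specifically the simultaneous compatibility of three ingredients: the singular weight, which only damps high-frequency modes and imposes vanishing conditions at $z=0$; the finite-dimensional low-frequency subspace, which must be diagonalized explicitly and where the symmetry-related neutral directions are killed by modulation; and the nonlocal Biot-Savart operator, which mixes these two regimes. The weight exponent $\mu$, the set of low Fourier modes treated exactly, and the number of modulation parameters must be chosen to fit together, and verifying strict negative definiteness on the low-mode subspace ultimately rests on a concrete finite matrix computation. Once these technical choices are pinned down, the remaining estimates reduce to weighted-Sobolev and Fourier manipulations of a standard character.
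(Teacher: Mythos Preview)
Your overall strategy---dynamic rescaling with pure amplitude scaling, perturbation around the $a=1$ steady state $(\sin z,\sin z,\sin z)$, linear stability via a singularly weighted energy plus explicit low-mode computation, then a bootstrap---is exactly the paper's. However, several of your concrete technical choices would not close as stated and differ from what the paper actually does.

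First, the weight. With $1/(\sin z)^{2\mu}$ the transport term $-2a\sin z\,\partial_z$ produces, after integration by parts, a zeroth-order term proportional to $\cos z/(\sin z)^{2\mu}$, which changes sign on $(\pi/2,\pi)$ and is therefore not damping. The paper instead takes $\rho=\dfrac{1}{2\pi(1-\cos z)}$, singular only at $z=0$, which yields the clean identity $(\sin z\,f_z,\,f\rho)=\tfrac{1}{2}(f,f\rho)$ with no $\cos z$ factor; this is the engine behind all the damping estimates. Second, the energy variables: the paper pairs $u_z$ (not $u$) with $\omega$, i.e.\ $E^2=\tfrac{1}{2}(\|u_z\|_\rho^2+\|\omega\|_\rho^2)$. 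This choice is not cosmetic: after differentiating the $u$-equation once, the Biot-Savart term $\sin z\,\psi_{zz}$ in $(L_1)_z$ exactly cancels the $\sin z\,u_z$ term in $L_2$, eliminating the worst nonlocal cross term. Your $(\tilde U,\tilde\Omega)$ energy would not see this cancellation. Third, there is no need for a perturbative profile: the paper uses the $a=1$ steady state directly, with residual $F_1=F_2=2(a-1)\sin z(1-\cos z)=O(|a-1|)$ and $\bar c_u=2(a-1)<0$, which already has the correct sign; the resulting inequality is $\tfrac{d}{dt}E\le -(0.16-C|a-1|)E+C|a-1|+CE^2$.

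Finally, your remark that the low-mode check ``rests on a concrete finite matrix computation'' is right, but understated: the paper expands in the orthonormal basis $o^k=\sin(kz)-\sin((k-1)z)$, $e^k=\cos(kz)-\cos((k+1)z)$ for $\rho$, reduces linear stability to positive-definiteness of an explicit infinite quadratic form in the Fourier coefficients, and verifies a truncation to $N=200$ modes by interval arithmetic (a $400\times400$ eigenvalue bound), with an analytic tail estimate. The paper also needs a second, higher-order energy $K^2=\|D_z u_z\|_\rho^2+\|D_z\omega\|_\rho^2$ with $D_z=\sin z\,\partial_z$ to close the convergence-to-steady-state argument; your ``higher-order Sobolev corrections'' would have to be of this specific weighted form to inherit the same damping identity.
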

\begin{remark}
    Such self-similar blowup that is neither expanding nor focusing is observed numerically for $a\in[0.6,0.9]$. See also a similar phenomenon observed for the gCLM model in \cite{lushnikov2021collapse} for $a\in[0.68,0.95]$. The blowup result for the gCLM model has been proved in \cite{chen2021slightly} for $a$ sufficiently close to $1$. We remark that for $a$ very close to $1$, since \zm{we can show that $c_{u,\infty}=2(a-1)+o(a-1)$}, the blowup time becomes very large due to the very small coefficient {$1-a$} in the vortex stretching term {which slightly dominates the advection term}. It would be extremely difficult to compute such singularity numerically since it takes an extremely long time for the singularity to develop. For $a$ below a critical value $a_0$, i.e. $a < a_0$, we observe that the weak advection Hou-Li model develops a focusing singularity.
\end{remark}
The second result is on the blowup of the original Hou-Li model with $C^\alpha$ initial data; for its proof see Section \ref{sec hol}. In \cite{elgindi2020effects}, the authors made an important observation that advection can be weakened by $C^\alpha$ data. Intuitively if $u =O(x^\alpha)$ in the origin, since $\psi$ is $C^2$, we have that $\psi u_x\approx \alpha\psi_x u$ near the origin, the vortex stretching term is stronger than the advection term if $\alpha < 1$. See \cite{chen2021finite,chen2021regularity} on results of  blowup of the DG model with H\"older continuous data.
\zm{
\begin{theorem}
\label{t2}
Consider the Hou-Li model \eqref{1d} in the inviscid case $\nu=0$. There exists a constant $\delta_0>0$ such that for $\alpha\in(1-\delta_0,1)$, \eqref{1d} develops a finite time singularity for some $C^{\alpha}$ initial data. Moreover, there exists a $C^{\alpha}$ self-similar profile corresponding to a blowup that is neither expanding nor focusing, similar to the setting in Theorem \ref{t1}.
\end{theorem}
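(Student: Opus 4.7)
The plan is to treat $1-\alpha$ as a small parameter and reduce the problem to a setting structurally identical to the proof of Theorem \ref{t1}. The heuristic, already flagged in the paragraph preceding the statement, is that $C^\alpha$ data effectively weakens advection: if $u(x)\sim c\,|x|^\alpha$ near $x=0$, then $u_x\sim(\alpha/x)u$, and since $\psi(0)=0$ we have $\psi\sim\psi_x(0)\,x$, so $2\psi u_x\approx 2\alpha\,\psi_x u$. Substituting into the $u$-equation of \eqref{1d} with $a=1$ leaves a net stretching contribution of $2(1-\alpha)\psi_x u$ at leading order, and analogously for $\omega$. This is exactly the effective structure of the weak-advection model \eqref{1dwp} with $a=\alpha$ and smooth data, so one expects the blowup analysis of Theorem \ref{t1} to port over, with $1-\alpha$ playing the role of the small parameter $1-a$.

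To make this reduction precise globally and not merely near the origin, the first step I would carry out is an Elgindi-type change of spatial variable $x\mapsto z(x)$ with $z(x)\sim\mathrm{sgn}(x)\,|x|^\alpha$ near the origin, extended smoothly and periodically on $[0,2\pi]$, so that $C^\alpha$ profiles in $x$ correspond to (at least) $C^1$ profiles in $z$. After pulling the system back to the $z$-variable and passing to dynamic rescaling
\[
\omega(x,t)=C_\omega(\tau)\widetilde\omega(z,\tau),\quad u(x,t)=C_u(\tau)\widetilde u(z,\tau),\quad \psi(x,t)=C_\psi(\tau)\widetilde\psi(z,\tau),
\]
with $\tau=\tau(t)$ chosen to preserve a normalization of $\widetilde u$, the transformed equations differ from those of the $a=\alpha$ weak-advection model in the smooth-data regime only by a remainder of size $O(1-\alpha)$. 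The approximate steady state $(\sin z,\sin z,\sin z)$ is therefore an approximate steady state of the transformed system with residual of the same size.

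Linearizing around this profile, the next step is to repeat the two-tier energy method of Sections \ref{linearest} and \ref{nonlinearest}: an exact Fourier computation on the lowest one or two modes pins down the non-local contribution $\widehat\psi=(-\partial_z^2)^{-1}\widehat\omega$ (suitably modified by the Jacobian of the transformation), while a singularly weighted $L^2$ estimate with weight behaving like $\csc^2 z$ at the origin extracts damping of order $1-\alpha$ on the high-frequency modes. The $O(1-\alpha)$ residual produced by the change of variables is absorbed by this damping. A nonlinear bootstrap in the spirit of the one used for Theorem \ref{t1} then upgrades linear stability to global-in-$\tau$ nonlinear stability, and passing to the limit $\tau\to\infty$ produces a $C^\alpha$ self-similar profile $(\omega_\infty,u_\infty,\psi_\infty)$ with scaling constants $c_u=c_\omega<0$ and $c_l=0$, which is precisely a blowup that is neither expanding nor focusing.

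The main obstacle, shared with all Elgindi-type H\"older-data blowup arguments, is making the change of variables fully compatible with the weighted energy framework. The Jacobian $z'(x)$ vanishes at the origin whenever $\alpha<1$, so partial derivatives in $z$ of the transformed fields carry inverse powers of $z$, and the non-local operator $(-\partial_z^2)^{-1}$ no longer equals its smooth-variable counterpart but picks up Jacobian factors that couple $\widetilde\omega,\widetilde u,\widetilde\psi$ in a new way. Choosing the singularly weighted norm sharply enough to cancel these inverse powers, control the modified non-local operator, and still retain $O(1-\alpha)$ damping of the principal linearized operator is the delicate task that distinguishes this argument from the smooth-data case of Theorem \ref{t1}.
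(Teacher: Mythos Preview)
Your proposal takes a genuinely different route from the paper, and the route you chose is harder than necessary.

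The paper does \emph{not} perform any change of variables. Instead, it works directly in the original $x$-variable and builds a $C^\alpha$ approximate steady state
\[
\bar\omega_\alpha=\mathrm{sgn}(x)|\sin x|^\alpha,\qquad \bar u_\alpha=\mathrm{sgn}(x)|\sin x|^{(1+\alpha)/2},
\]
with $\bar\psi_\alpha$ recovered from $-\bar\psi_{\alpha,xx}=\bar\omega_\alpha$. The crucial trick is that the linearization around this $C^\alpha$ profile is decomposed as the \emph{same} leading operators $L_1,L_2$ that appeared in the smooth ($a<1$) analysis, plus residual terms $R_{i,\alpha}$ governed by $u_{\mathrm{res}}=\bar u_\alpha-\sin x$, $\omega_{\mathrm{res}}=\bar\omega_\alpha-\sin x$, $\psi_{\mathrm{res}}=\bar\psi_\alpha-\sin x$. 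Pointwise bounds of the form $|\partial_x^i u_{\mathrm{res}}|\lesssim |1-\alpha|\,|\sin x|^{\kappa-i}$ show these residuals are $O(|1-\alpha|^{1/2})$ in the same singularly weighted energy $E^2=\frac12(\|u_x\|_\rho^2+\|\omega\|_\rho^2)$ used for Theorem~\ref{t1}. Proposition~\ref{prop d} is then reused verbatim to get the $-0.16E^2$ damping, and the bootstrap closes exactly as before with $1-\alpha$ replacing $1-a$.

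Compared to this, your Elgindi-type change of variables buys nothing and costs a lot: as you yourself note, the Jacobian $z'(x)\sim\alpha|x|^{\alpha-1}$ is singular, and the Biot--Savart law $-\psi_{xx}=\omega$ becomes a $z$-dependent operator with coefficients blowing up at the origin, which would have to be controlled in the weighted norm. The paper's approach sidesteps this entirely by keeping the nonlocal operator intact and pushing all the $\alpha$-dependence into small additive residuals. One further point: your claim that the weighted estimate ``extracts damping of order $1-\alpha$'' is off. The damping is of order one (the constant $0.16$), and it is the residuals and forcing that are $O(1-\alpha)$; if the damping were only $O(1-\alpha)$ you could not absorb $O(1-\alpha)$ remainders without a much more delicate argument.
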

\begin{remark}
    This theorem establishes blowups of type $C^{\alpha}$ for any $\alpha$ close to 1, which of course implies blowups in less regular classes since $C^{\alpha}\subset C^{\alpha_1}$ for $\alpha_1<\alpha$. The regularity of the profile determines the speed of the blowup since our constructed $C^{\alpha}$ profile has blowup time $T=O(-1/(2(\alpha-1)))$. We remark that however, we do not have blowup for data intrinsically in a low regularity class $C^{\epsilon}$ for $\epsilon$ close to $0$; that is, data that is $C^{\epsilon}$ but not in any higher $C^\alpha$ classes. We conjecture that such blowup might be focusing, which is beyond the scope of this paper.
\end{remark}
\begin{remark}
    The above two theorems imply that the result of the wellposedness in \cite{hou2008dynamic} of the Hou-Li model for $C^1$ initial data is sharp. As long as the advection is weakened or slightly less smooth data is allowed, we would have a self-similar blowup.
\end{remark}}
The third result is on the finite-time blowup of the weak advection model with viscosity. The dynamic rescaling formulation implies that the viscous terms are asymptotically small. Thus, we can build on Theorem \ref{t1} to establish Theorem \ref{t3}. \zm{We remark that there is no exact self-similar profile due to the viscous term.}
We will provide more details of the blowup analysis for the viscous case in Section \ref{sec5}.
\begin{theorem}
\label{t3}
Consider the weak advection model \eqref{1dwp} with viscosity. There exists a constant $\delta_1>0$ such that for $a\in(1-\delta_1,1)$, the weak advection model \eqref{1dwp} develops a finite time singularity for some $C^{\infty}$ initial data.
\end{theorem}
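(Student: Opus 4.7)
The plan is to perturb around the inviscid self-similar profile $(\omega_\infty, u_\infty, \psi_\infty)$ built in Theorem \ref{t1} and treat viscosity as a time-decaying perturbation in dynamic rescaling variables. I would introduce rescaled variables $\Omega(x,\tau)=C_\omega(\tau)\,\omega(x,t(\tau))$, $U(x,\tau)=C_u(\tau)\,u(x,t(\tau))$, $\Psi(x,\tau)=C_\psi(\tau)\,\psi(x,t(\tau))$ with $dt/d\tau = C_t(\tau)$, tuned to match the inviscid self-similar rescaling from Theorem \ref{t1}. Because the inviscid blowup is neither focusing nor expanding, there is no spatial rescaling, $C_\omega, C_u, C_\psi$ decay like $e^{-\tau}$ (so the physical variables grow like $e^{\tau}$), and $C_t(\tau)$ decays exponentially. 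Substituting into \eqref{1dwp} one finds that the viscous terms enter the rescaled system with coefficient $\nu C_t(\tau)$, hence appear as an exponentially small, exogenous perturbation of the autonomous inviscid rescaled flow.

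Writing $(\Omega,U,\Psi) = (\omega_\infty, u_\infty,\psi_\infty)+(\bar\Omega,\bar U,\bar\Psi)$ then yields a system of the schematic form $\partial_\tau \bar\Omega = \mathcal{L}_0(\bar\Omega,\bar U) + \nu C_t(\tau)\bar\Omega_{xx} + N(\bar\Omega,\bar U)$ and similarly for $\bar U$, where $\mathcal{L}_0$ is the linearized inviscid operator about the approximate steady state. Theorem \ref{t1} supplies coercivity of $-\mathcal{L}_0$, up to finite-dimensional modulation directions absorbed by the normalization parameters, in a singularly weighted energy norm $E_{\mathrm{sing}}(\tau)$ designed to extract damping from high-frequency modes while permitting exact computation of low modes. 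The difficulty, as announced in the abstract, is that $E_{\mathrm{sing}}$ is incompatible with the viscous term: integrating $\nu C_t\,\bar\Omega_{xx}$ against the singular weight produces sign-indefinite and potentially unbounded local contributions near the fixed point of the flow, so viscosity fails to supply its usual dissipative sign.

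To repair this I would follow the strategy indicated in the introduction and work with a composite energy
$$\mathcal{E}(\tau) \;=\; E_{\mathrm{sing}}(\tau) \;+\; K \sum_{k=0}^{m}\Bigl(\|\partial_x^k\bar\Omega\|_{L^2}^2 + \|\partial_x^k\bar U\|_{L^2}^2 + \|\partial_x^k\bar\Psi\|_{L^2}^2\Bigr)$$
for a suitable Sobolev order $m\geq 2$ and a large constant $K$. On the Sobolev block the viscous term behaves classically, $\nu C_t\,\langle \partial_x^k\bar\Omega, \partial_x^{k+2}\bar\Omega\rangle = -\nu C_t\,\|\partial_x^{k+1}\bar\Omega\|_{L^2}^2$, giving a small but genuinely dissipative contribution, while the transport, vortex stretching, and Biot--Savart terms, which Sobolev norms alone cannot dominate, are controlled by $E_{\mathrm{sing}}$ through the pointwise control it affords near the blowup point. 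The viscous damage to $E_{\mathrm{sing}}$ is in turn estimated by $\nu C_t(\tau)\sqrt{E_{\mathrm{sing}}\,E_{\mathrm{Sob}}}$ and absorbed into $\mathcal{E}$ thanks to $\nu C_t(\tau)\to 0$. This yields a linear inequality of the form $d\mathcal{E}/d\tau \leq -\mu\,\mathcal{E} + C\,\nu C_t(\tau)\,\mathcal{E}$, hence net damping after a finite relaxation time. A standard bootstrap closes the nonlinear estimate $N = O(\mathcal{E}^{3/2})$ provided the initial $C^\infty$ perturbation is sufficiently small, and exponential growth of $C_\omega^{-1}, C_u^{-1}$ together with integrability of $dt/d\tau = C_t(\tau)$ on $(0,\infty)$ converts rescaled stability into finite-time blowup in physical time.

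The hard part, and the reason this is not merely a perturbation of Theorem \ref{t1}, is the calibration between the two blocks of the composite norm. The singular weight in $E_{\mathrm{sing}}$ must remain strong enough to extract linear damping from $\mathcal{L}_0$ in all the dangerous modes, exactly as in the inviscid proof; simultaneously the Sobolev order $m$, the coupling constant $K$, and the weight exponent must be chosen so that (i) the viscous cross-term between the two blocks is strictly subdominant, and (ii) the nonlinear transport and vortex stretching terms in the high-order Sobolev block do not overwhelm the meager viscous dissipation $-\nu C_t\|\partial_x^{k+1}\cdot\|_{L^2}^2$. Once this delicate bookkeeping is arranged, the remainder of the argument follows the blueprint of Theorem \ref{t1}, with $\nu C_t(\tau)$ playing the role of an exogenous small parameter analogous to $1-a$.
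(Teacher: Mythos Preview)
Your outline captures the correct broad architecture---dynamic rescaling, perturbation around an inviscid profile, composite energy, treating $\nu C_u(\tau)$ as a decaying perturbation---but there is a genuine gap in the choice of the higher-order block. In a \emph{standard} unweighted $H^k$ norm on $S^1$, the linearized transport $-2\bar\psi\,\partial_x$ (with $\bar\psi\approx\sin x$) does \emph{not} give damping: after $k$ derivatives and integration by parts one obtains $(1-2k)\int\cos x\,(f^{(k)})^2\,dx$, which is sign-indefinite and produces growth near $x=\pi$. This growth is $O(1)$ in rescaled time, whereas the viscous dissipation you invoke carries the coefficient $\nu C_u(\tau)$, which decays only at rate $|c_{u,\infty}|=O(|a-1|)$. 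Hence the Sobolev block grows exponentially, and since the viscous damage to the singular block is actually quadratic in high-derivative pointwise values (e.g.\ $|u_{xxx}(0)|^2$, $\|u_x/x\|_\rho^2$), not of the form $\nu C_t\sqrt{E_{\mathrm{sing}}E_{\mathrm{Sob}}}$ you posit, the bootstrap does not close.

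The paper's remedy is to replace unweighted $H^k$ by the weighted norms $E_k^2=(u^{(k+1)},u^{(k+1)}\rho_k)+(\omega^{(k)},\omega^{(k)}\rho_k)$ with $\rho_k=(1+\cos x)^k$. This weight vanishes at $x=\pi$ exactly where $\cos x<0$, so the transport computation yields $(-k-(k-1)\cos x)\rho_k\le -\rho_k$ and hence damping $-E_k^2$ at every order $k\ge 1$, independently of viscosity. Since $\rho_k\sim 1$ near $x=0$, Gagliardo--Nirenberg with $k_0=4$ still controls the pointwise quantities $|u_{xxx}(0)|$, $\|\omega_{xxx}\|_{L^\infty}$ that the singular weight produces. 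Viscosity is then handled purely as a small perturbation (never as a source of damping) by spending the extra scaling freedom $C_u(0)=|a-1|^2$. The paper also perturbs around $\sin x$ rather than the profile $\omega_\infty$ of Theorem~\ref{t1}, which avoids needing fine information about $\omega_\infty$.
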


We use the framework of the dynamic rescaling formulation to establish the blowups. This formulation was first introduced by McLaughlin, Papanicolaou, and co-workers in their study of self-similar blowup of the nonlinear Schr\"odinger equation  \cite{mclaughlin1986focusing,landman1988rate}.  This formulation was later developed into an effective modulation technique, which has been applied to analyze the singularity formation for the nonlinear Schr\"odinger equation \cite{kenig2006global,merle2005blow}, compressible Euler equations \cite{buckmaster2019formation},  the nonlinear heat equation \cite{merle1997stability}, the generalized KdV equation \cite{martel2014blow}, and other dispersive problems. Recently this approach has been applied to prove singularity in various gCLM models \cite{chen2021finite,chen2021regularity,chen2021slightly} and in Euler equations \cite{ elgindi2021finite, chen2019finite2, chen2022stable}. Our blowup analysis consists of several steps.  First, we use the dynamic rescaling formulation to link a self-similar singularity to the (stable) steady state of the dynamic rescaling formulation. Secondly, we identify, either analytically or numerically, an approximate steady state to the dynamic rescaling formulation. Thirdly, we perform energy estimates using a singularly weighted norm to establish linear and nonlinear stability of the approximate steady state. Finally, we establish exponential convergence to the steady state in the rescaled time. 

The crucial ingredient of the framework is the linear stability of the approximate steady state, and we usually adopt a singularly weighted $L^2$-based estimate. {To avoid an overestimate in the linear stability analysis, we expand the perturbation in terms of the orthonormal basis with respect to the weight $L^2$ norm and reduce the linear stability estimate into an estimate of a quadratic form for the Fourier coefficients. We further extract the damping effect of the linearized operator by establishing a lower bound on the eigenvalues of an infinite-dimensional symmetric matrix. We prove the positive-definiteness of this quadratic form by performing an exact computation of the eigenvalues of a small number of Fourier modes with rigorous computer-assisted bounds}, and treat the high-frequency Fourier modes as a small perturbation by using the asymptotic decay of {the quadratic form in} the high-frequency Fourier coefficients.

\subsection{Organization of the paper and notations}
In Section \ref{linearest}, we introduce our dynamic rescaling formulation and link the blowup of the physical equation to the steady state of the dynamic rescaling formulation. The linear stability of the approximate steady state is established.
In Section \ref{nonlinearest}, we establish the nonlinear stability of the approximate steady state and the exponential convergence to the steady state, which proves Theorem \ref{t1} and the blowup for the weak advection model. In Section \ref{sec hol}, we prove Theorem \ref{t2} and establish blowup for the original model with H\"older continuous data. In Section \ref{sec5}, we prove Theorem \ref{t3} by designing a special energy norm to estimate the viscous terms. We provide the crucial linear damping estimates in  the Appendix using computer assistance.

Throughout the article, we use $(\cdot,\cdot)$ to denote the inner product on $S^1$: $(f,g)=\int_{-\pi}^{\pi}fg$. We use $C$ to denote absolute constants, which may vary from line to line, and we use $C(k)$  to denote some constant that may depend on specific parameters $k$  we choose. We use $A\lesssim B$ for positive $B$ to denote that there exists an absolute constant $C>0$ such that $A\leq CB$.
\section{Dynamic rescaling formulation and linear estimates}
\label{linearest}
\subsection{Dynamic rescaling formulation}
\label{drf sec}
We will establish the singularity formation of the weak advection model by using the dynamic rescaling formulation. We first consider the inviscid case with $\nu=0$. For solutions to the system \eqref{1dwp}, we introduce $$\tilde{u}(x, \tau)=C_{u}(\tau)  u( x, t(\tau))\,,\quad \tilde{\omega}(x, \tau)=C_{u}(\tau)  \omega( x, t(\tau))\,,\quad \tilde{\psi}(x, \tau)=C_{u}(\tau)  \psi(x, t(\tau))\,,$$ where $$
C_{u}(\tau)=\exp \left(\int_{0}^{\tau} c_{u}(s) d s\right)\,, \quad t(\tau)=\int_{0}^{\tau} C_{u}(s) d s\,.$$
We can show that the rescaled variables solve the following dynamic rescaling equation \begin{equation}
\label{1drf}
\begin{aligned}
\tilde{u}_{\tau}+2a \tilde{\psi}\tilde{u}_{ x} &=2 \tilde{u} \tilde{\psi}_{ x}+c_u \tilde{u}\,, \\
\tilde{\omega}_{ \tau}+2a \tilde{\psi}\tilde{\omega}_{ x} &=\left(\tilde{u}^{2}\right)_{x}+c_{u}\tilde{\omega}\,, \\
-\tilde{\psi}_{xx} &=\tilde{\omega}\,.
\end{aligned}
\end{equation}

\begin{remark}
We do not rescale the spatial variable $x$, since we are interested in a blowup solution that is neither focusing nor expanding within a fixed period. The scaling factors for $u$, $\omega$, $\psi$ are thus the same.
\end{remark}
When we establish a self-similar blowup, it suffices to show the dynamic stability of equation \eqref{1drf} close to an approximate steady state  with scaling parameter $c_u<-\epsilon<0$ uniformly in time for a small constant $\epsilon$; see also \cite{chen2021finite}. In fact, it's easy to see that if $(\tilde{u},\tilde{\omega},\tilde{\psi},c_u)$ converges to a steady-state $({u}_{\infty},{\omega}_{\infty},{\psi}_{\infty},c_{u,\infty})$ of \eqref{1drf}, then $$\omega(x,t)=\frac{1}{1+c_{u,\infty}t}{\omega}_{\infty}\,,u(x,t)=\frac{1}{1+c_{u,\infty}t}u_{\infty}\,,\psi(x,t)=\frac{1}{1+c_{u,\infty}t}{\psi}_{\infty}\,,$$
is a self-similar solution of \eqref{1dwp}. 
    
From now on, we will primarily work in the dynamic rescaling formulation and use the notations that $\tilde{u} =\bar{u}+\hat{u}$, where $\bar{u}$ is the approximate steady state that we perturb around and $\hat{u}$ is the perturbation. Notations for variables $\tilde{\omega}$ and $\tilde{\psi}$ are similar. 
\subsection{Equations governing the perturbation}
We use the steady state corresponding to the case of $a=1$ to construct an approximate steady state for \eqref{1drf}.
$$\bar{\omega}=\sin x\,,\quad\bar{u}=\sin x\,,\quad\bar{\psi}=\sin x\,,\quad \bar{c}_u=2(a-1)\bar{\psi}_x(0)=2(a-1)\,.$$
We consider odd perturbations $\hat{u}$, $\hat{\omega}$, $\hat{\psi}$. The parities are preserved in time by equation \eqref{1drf}.  We use the normalization condition as $c_u=2(a-1)\hat{\psi}_x(0)$. This normalization ensures that $\bar{u}_x(0)+\hat{u}_x(0)$ is conserved in time. 

To simplify our presentation, we will drop the $\hat{}$ in the perturbation $\hat{u}$ and use $u$ for $\hat{u}$, $\omega$ for $\hat{\omega}$, $\psi$ for $\hat{\psi}$.   Now the perturbations satisfy the following system
\begin{equation}
\label{1drfr}
\begin{aligned}
{u}_{ \tau} &=-2a \sin x u_x-2a \cos x \psi+ 2 u \cos x+2\sin x \psi_x+\bar{c}_u u+ c_u\bar{u}+N_1+F_1\,, \\
{\omega}_{ \tau}&=-2a \sin x \omega_x-2a \cos x\psi +2 u \cos x+2\sin x u_x+\bar{c}_u \omega+ c_u\bar{\omega}+N_2+F_2\,, \\
-{\psi}_{xx} &={\omega}\,,
\end{aligned}
\end{equation}
where $N_1, \; N_2$ and $F_1, \; F_2$ are the nonlinear terms and error terms defined below:
$$N_1=(c_u+2\psi_x)u-2a\psi u_x\,,\quad N_2=c_u\omega+2uu_x-2a\psi \omega_x\,,$$
$$F_1=(\bar{c}_u+2\bar{\psi}_x)\bar{u}-2a\bar{\psi} \bar{u}_x=2(a-1)\sin x(1-\cos x)\,,\quad F_2=\bar{c}_u\bar{\omega}+2\bar{u}\bar{u}_x-2a\bar{\psi} \bar{\omega}_x=F_1\,.$$
We further organize the system \eqref{1drfr} into the main linearized term and a smaller term containing a factor of $a-1$:
\begin{equation}
\label{1drc}
\begin{aligned}
{u}_{ \tau} &=L_1+(a-1)L'_1+N_1+F_1\,, \\
{\omega}_{ \tau}&=L_2+(a-1)L'_2+N_2+F_2\,, \\
-{\psi}_{xx} &={\omega}\,.
\end{aligned}
\end{equation}
where
$$L_1=-2 \sin x u_x-2 \cos x \psi+ 2 u \cos x+2\sin x \psi_x\,,$$ $$ L'_1=-2\sin x u_x-2 \cos x \psi+2u+2\psi_x(0)\sin x\,,$$ $$L_2=-2 \sin x \omega_x-2 \cos x\psi +2 u \cos{x}+2\sin x u_x\,,$$ $$ L'_2=-2 \sin x \omega_x-2 \cos x\psi+2\omega+2\psi_x(0)\sin x \,.$$

To show that the dynamic rescaling equation is stable and converges to a steady state, we will perform a weighted-$L^2$ estimate with a singular weight $\rho$ and a weighted $L^2$ norm$$\rho=\frac{1}{2\pi(1-\cos x)}\,,\quad \|f\|_{\rho}=(f^2,\rho)^{1/2}\,.$$ For initial perturbation with $u_x(0,0)=0$, we have $u_x(0,\tau)=0$ for all time and 
$$E^2(\tau)=\frac{1}{2}((u_x^2,\rho)+(\omega^2,\rho))\; ,$$ 
is well-defined. We will first show that the dominant parts $L_1$ and $L_2$ provide damping.  The following lemma is crucial and motivates the choice of $\rho$.
\begin{lemma}
\label{dampl}We have the following identity
$$(\sin x f_x,f\rho)=\frac{1}{2}(f^2,\rho)\,,$$
which can be verified directly by using integration by parts.
\end{lemma}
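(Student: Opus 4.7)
The proof plan is to apply integration by parts after rewriting $\sin x\, f_x \cdot f = \frac{1}{2}\sin x\,(f^2)_x$, so that the identity reduces entirely to an algebraic property of the weight. Specifically, I would write
\begin{equation*}
(\sin x\, f_x, f\rho) = \frac{1}{2}\int_{-\pi}^{\pi}\sin x\,(f^2)_x\,\rho\,dx = -\frac{1}{2}\int_{-\pi}^{\pi}f^2\,(\sin x\,\rho)_x\,dx + \text{boundary terms},
\end{equation*}
and then carry out the one-line computation
\begin{equation*}
(\sin x\,\rho)_x = \frac{d}{dx}\!\left(\frac{\sin x}{2\pi(1-\cos x)}\right) = \frac{\cos x(1-\cos x)-\sin^2 x}{2\pi(1-\cos x)^2} = \frac{\cos x - 1}{2\pi(1-\cos x)^2} = -\rho.
\end{equation*}
Substituting this in immediately yields $(\sin x\, f_x, f\rho) = \tfrac12 (f^2,\rho)$, which is exactly the claim.

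The only subtlety is justifying that the boundary contributions vanish. At $x=\pm\pi$ this is automatic because $\sin x = 0$ there while $\rho$ is finite, so $f^2\sin x\,\rho$ vanishes at the endpoints. Near $x=0$ the weight is singular, $\rho \sim 1/(\pi x^2)$, but on the class of perturbations that we consider ($f$ odd and either $f$ or its antiderivative vanishing at $0$ as stated right before the lemma, where $u$ satisfies $u_x(0,t)=0$), we have $f^2\sin x\,\rho$ integrable and with zero limit at $0$, so integration by parts is legitimate. I expect no real obstacle here; the identity $(\sin x\,\rho)_x = -\rho$ is precisely what singles out $\rho = 1/[2\pi(1-\cos x)]$ as the correct singular weight for neutralizing the transport term $2\sin x\,\partial_x$ in the linearized operator, and this is the whole content of the lemma.
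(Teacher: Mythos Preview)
Your proposal is correct and is exactly the integration-by-parts verification the paper has in mind; the key algebraic identity $(\sin x\,\rho)_x=-\rho$ is the whole point, and your derivation of it is accurate. Your additional care about the boundary terms near the interior singularity at $x=0$ is appropriate and, if anything, more detailed than what the paper itself spells out.
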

\subsection{Stability of the main parts in the linearized equation}
 \label{stab main}

In order to extract the maximal amount of damping, we will expand the perturbed solution in the Fourier series and perform exact calculations. We first explore the orthonormal basis in $L^2(\rho)$.
\begin{lemma}
For the space of odd periodic functions on $[0,2\pi]$, we describe a complete set of orthonormal basis $\{o^k\}$ in $L^2(\rho)$  $$o^k=\sin(kx)-\sin((k-1)x)\,,\quad k=1,2,\cdots\,.$$
Similarly, for the space of even periodic functions that lie in $L^2(\rho)$, we describe a complete set of orthonormal basis $\{e^k\}$
$$e^k=\cos(kx)-\cos((k+1)x)\,,\quad k=0,1,\cdots\,.$$
\end{lemma}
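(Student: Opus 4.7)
My plan is to exploit the sum-to-product identities
\begin{equation*}
o^k = 2\sin(x/2)\cos\bigl((k-\tfrac{1}{2})x\bigr), \qquad e^k = 2\sin(x/2)\sin\bigl((k+\tfrac{1}{2})x\bigr),
\end{equation*}
combined with the factorization $\rho = \frac{1}{4\pi\sin^2(x/2)}$. Together these collapse the weighted products $(o^j o^k)\rho$ and $(e^j e^k)\rho$ into ordinary trigonometric polynomials with no singular weight, reducing the whole lemma to questions about the classical half-integer cosine/sine families on $[-\pi,\pi]$.

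For orthonormality I would then compute directly: $(o^j, o^k)_\rho = \tfrac{1}{\pi}\int_{-\pi}^{\pi}\cos((j-\tfrac{1}{2})x)\cos((k-\tfrac{1}{2})x)\,dx$, and expand the integrand via product-to-sum as $\tfrac{1}{2}[\cos((j-k)x)+\cos((j+k-1)x)]$. Since $j-k$ and $j+k-1$ are integers, each term integrates to $2\pi$ when its argument vanishes and to $0$ otherwise; the only coincidence is $j=k$ with the first term, producing $\delta_{jk}$. The $\{e^k\}$ case is identical, with a product of sines rewritten as a difference of cosines.

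For completeness I would introduce the isometry $U\colon L^2(\rho)\to L^2\bigl([-\pi,\pi],\,dx/\pi\bigr)$ defined by $(Uf)(x) = f(x)/(2\sin(x/2))$. A direct calculation gives $\|Uf\|_{L^2(dx/\pi)}^2 = \|f\|_\rho^2$. Since $\sin(x/2)$ is odd, $U$ carries odd $2\pi$-periodic functions to even functions on $[-\pi,\pi]$ and vice versa, and under $U$ the candidate bases map precisely to the classical families $\{\cos((k-\tfrac{1}{2})x)\}_{k\geq 1}$ and $\{\sin((k+\tfrac{1}{2})x)\}_{k\geq 0}$. Completeness of these on the corresponding parity subspaces of $L^2([-\pi,\pi])$ is standard: they are the Neumann–Dirichlet (resp.\ Dirichlet–Neumann) Sturm–Liouville eigenfunctions on $[0,\pi]$, or equivalently the real and imaginary parts of the antiperiodic Fourier basis $\{e^{i(k-1/2)x}\}_{k\in\mathbb{Z}}$ of $L^2([-\pi,\pi])$ after grouping $k \leftrightarrow 1-k$.

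The routine part is the trigonometric computation; the substantive step is verifying that $U$ is a \emph{unitary} isomorphism between the correct parity subspaces, not just an isometric embedding. On the odd side, an odd $2\pi$-periodic function automatically satisfies $f(\pm\pi) = 0$, so $Uf$ has no trouble at the endpoints; on the even side, membership in $L^2(\rho)$ enforces enough vanishing at $x=0$ for $Uf$ to be square-integrable. I expect the main technical care to lie in confirming that $U$ surjects onto the full even (resp.\ odd) subspace of $L^2([-\pi,\pi],dx/\pi)$ — in particular that the pointwise boundary constraint at $x=\pm\pi$ disappears in $L^2$ so that no codimension is lost — after which the classical half-integer Fourier completeness closes the proof.
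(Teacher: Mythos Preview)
The paper states this lemma without proof, so there is no argument to compare against; your proposal supplies what the paper omits, and it is correct.

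Your reduction via the half-angle factorization is the natural one: writing $o^k = 2\sin(x/2)\cos\bigl((k-\tfrac12)x\bigr)$, $e^k = 2\sin(x/2)\sin\bigl((k+\tfrac12)x\bigr)$ and $\rho = \bigl(4\pi\sin^2(x/2)\bigr)^{-1}$ makes orthonormality a one-line trig computation. The isometry $U\colon f\mapsto f/(2\sin(x/2))$ into $L^2\bigl([-\pi,\pi],dx/\pi\bigr)$ is the right device for completeness, and the parity swap (odd $\leftrightarrow$ even) under division by the odd factor $\sin(x/2)$ is exactly as you describe. The surjectivity concern you flag is not a real obstruction: given even $g\in L^2([-\pi,\pi])$, the function $f=2\sin(x/2)\,g$ on $(-\pi,\pi)$ is odd and lies in $L^2(\rho)$, and the single point $x=\pm\pi$ where periodicity and oddness might conflict is measure zero; the even case is symmetric. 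The half-integer families $\{\cos((k-\tfrac12)x)\}_{k\ge1}$ and $\{\sin((k+\tfrac12)x)\}_{k\ge0}$ are indeed complete on the respective parity subspaces of $L^2([-\pi,\pi])$, either by the Sturm--Liouville argument you give or by noting that $\{e^{i(k-1/2)x}\}_{k\in\mathbb Z}=e^{-ix/2}\{e^{ikx}\}_{k\in\mathbb Z}$ is a unitary image of the standard Fourier basis. No gaps.
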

Now we are now ready to establish linear stability. 
\begin{proposition}\label{prop d} The following energy estimate holds for the leading linearized operators
$$dE_1\coloneqq((L_1)_x,u_x\rho)+(L_2,\omega\rho)\leq -0.16[(u_x,u_x\rho)+(\omega,\omega\rho)]\,.$$
\end{proposition}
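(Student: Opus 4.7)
The plan is to separate $dE_1$ into a local advection contribution, which produces full damping thanks to the singular weight $\rho$, and a nonlocal/vortex-stretching remainder, which I will diagonalize in the orthonormal Fourier bases $\{e^k\}$ and $\{o^k\}$. A direct computation using $-\psi_{xx}=\omega$ shows that inside $(L_1)_x$ the pairs $\pm 2\cos x\, u_x$ and $\pm 2\cos x\,\psi_x$ cancel, leaving
\[
(L_1)_x = -2\sin x\, u_{xx} + 2\sin x\,(\psi-u-\omega).
\]
Pairing with $u_x\rho$ and applying Lemma \ref{dampl} with $f=u_x$ gives the exact identity $(-2\sin x\, u_{xx},u_x\rho) = -(u_x^2,\rho)$; the same lemma applied with $f=\omega$ to the $-2\sin x\,\omega_x$ piece of $L_2$ yields $-(\omega^2,\rho)$. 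Hence
\[
dE_1 = -\bigl[(u_x^2,\rho)+(\omega^2,\rho)\bigr] + Q,
\]
where $Q$ collects all remaining couplings, which are quadratic in $(u_x,\omega)$ via the relations $\psi=(-\partial_x^2)^{-1}\omega$ and $u=\partial_x^{-1}u_x$. The proposition then reduces to the quadratic inequality $Q \le 0.84\bigl[(u_x^2,\rho)+(\omega^2,\rho)\bigr]$.

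To bound $Q$, I would expand the even function $u_x=\sum_{k\ge 0}a_k e^k$ and the odd function $\omega=\sum_{k\ge 1}b_k o^k$ in the orthonormal bases of $L^2(\rho)$. Because $\sin x$ and $\cos x$ shift Fourier index by one, and because $(-\partial_x^2)^{-1}$ acts diagonally on the ordinary sine basis with factor $1/k^2$, every basis vector is sent by the linearized operator into a finite linear combination of a few neighboring basis vectors. The orthonormality of $\{e^k\},\{o^k\}$ then collapses all cross-terms, and $Q$ rewrites as $\langle M\mathbf{c},\mathbf{c}\rangle_{\ell^2}$ with $\mathbf{c} = (a_0,a_1,\dots;\,b_1,b_2,\dots)$ and $M$ an explicit symmetric banded infinite matrix. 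The claim becomes the spectral inequality $M\preceq 0.84\,I$, which I would verify by splitting $M$ into a finite $N\times N$ low-frequency block plus an infinite tail: the low block's eigenvalues are computed with certified rounding (the computer-assisted bounds to appear in the Appendix), while the tail is dominated via Cauchy--Schwarz using the $1/k^2$ decay of the Biot--Savart factor together with the narrow bandwidth of $M$.

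The main obstacle is extracting the genuine strict damping constant $0.16$, which is numerically small precisely because for $a=1$ the stretching and transport are so delicately balanced that $(\sin x,\sin x,\sin x)$ is an exact steady state. A naive operator-norm bound on $M$ will not produce any negative margin; one must compute the lowest few eigenvalues of $M$ to high precision with certified arithmetic and choose the cutoff $N$ large enough that the off-block coupling between the finite part and the tail cannot erase the $0.16$ margin. This is where the weight $\rho = 1/(2\pi(1-\cos x))$ plays its dual role: through Lemma \ref{dampl} it extracts the full $-[(u_x^2,\rho)+(\omega^2,\rho)]$ damping from the transport terms, and through the orthonormality of $\{e^k\},\{o^k\}$ it yields the tightest possible Fourier representation of the nonlocal quadratic form, leaving as little slack as possible for the high-frequency tail to spoil.
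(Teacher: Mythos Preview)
Your overall strategy matches the paper's: extract the full damping $-[(u_x^2,\rho)+(\omega^2,\rho)]$ from the transport terms via Lemma~\ref{dampl}, express the remaining nonlocal quadratic form in the orthonormal bases $\{e^k\},\{o^k\}$, and verify the resulting spectral inequality by combining a certified finite-block eigenvalue computation with a decay-based tail bound. The paper also records the additional negative piece $-(u^2,\rho)$ coming from $(-2u\sin x,u_x\rho)$, which you have folded into $Q$; that is harmless.

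There is one genuine inaccuracy: the matrix $M$ is \emph{not} banded. Your reasoning that ``$(-\partial_x^2)^{-1}$ acts diagonally on the ordinary sine basis'' is correct, but the weighted bases $o^k,e^k$ are not eigenfunctions of $(-\partial_x^2)^{-1}$ or of $\partial_x^{-1}$, and the change of basis between $\{\sin kx\}$ and $\{o^k\}$ is triangular rather than finite. Concretely, if $u_x=\sum_{k\ge1}c_ke^k$ and $u=\sum_{k\ge1}b_ko^k$, the paper derives $b_i=\sum_{k\ge i}\frac{c_k}{k(k+1)}-\frac{c_{i-1}}{i}$, so the coefficients of $u$ (and likewise of $\psi$) depend on an infinite tail of the $u_x$- (resp.\ $\omega$-) coefficients. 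The terms $2(u\cos x,\omega\rho)$ and $-2(\cos x\,\psi,\omega\rho)$ therefore produce a full matrix with $O(1/k^2)$ off-diagonal decay, exactly as in the explicit quadratic form \eqref{dample}. Your tail argument still works, but it must rely on this decay and a row-sum/Cauchy--Schwarz bound (as in Lemma~\ref{cap}), not on ``narrow bandwidth.'' Also note $a_0=0$ in your expansion of $u_x$, since $(u_x,e^0\rho)=\frac{1}{2\pi}\int u_x=0$.
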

\begin{proof}
Consider the expansion of $\omega$, $u_x$, and $u$ in the orthonormal basis
$$\omega=\sum_{k\geq 1} a_k o^k\,, \quad u=\sum_{k\geq 1} b_k o^k\,, \quad u_x=\sum_{k\geq1} c_k e^k\,.$$
Note the summation index for $u_x$ satisfies $k \geq 1$ since we can easily see that $$ (u_x,e^0\rho)=\frac{1}{2\pi}\int_0^{2\pi} u_x=0\,.$$We first express $b_k$ in terms of $c_k$. If we insert the expression of the basis into $u$, take derivative and compare the coefficients with the expansion of $u_x$, we get $$c_i=\sum_{k=1}^i b_k-ib_{i+1}\,.$$
Therefore we can solve $$b_{i+1}=b_1-\sum_{k=1}^{i-1}\frac{c_k}{k(k+1)}-\frac{c_{i}}{i}\,.$$
Moreover, we have the compatibility condition $u_x(0)=\sum_{k \geq 0} b_k=0$. Therefore we can solve $b_1$ and obtain 
\begin{equation}\label{bc}
    b_{i}=\sum_{k\geq i}\frac{c_k}{k(k+1)}-\frac{c_{i-1}}{i}\,,
\end{equation}
where we define $c_0=0$.

Now we write out the terms explicitly using the expansions
\begin{equation*}
\begin{aligned}
dE_1&=2(-u\sin x-u_{xx}\sin x,u_x\rho)+2(\sin x\psi+\sin x\psi_{xx},u_x\rho)\\&+2(- \sin x \omega_x- \cos x\psi,\omega \rho) +2 (u \cos{x}+\sin x u_x,\omega \rho)\\&=-[(u_x,u_x\rho)+(\omega,\omega\rho)+(u,u\rho)]+2[-( \cos x\psi,\omega \rho)+(\sin x\psi,u_x\rho)+ (u \cos{x},\omega \rho)]\,.    
\end{aligned}    
\end{equation*}

Here we use the crucial Lemma \ref{dampl} to extract damping on the local terms and the Biot-Savart law $-\psi_{xx}=\omega$ to cancel the effect of the nonlocal terms {$\sin x\psi_{xx}$ in $(L_1)_x$ and $\sin xu_{x}$ in $L_2$}. Next, we calculate the remaining nonlocal terms explicitly.
$$-2( \cos x\psi,\omega \rho)=(2(1-\cos x)\psi,\omega \rho)-2(\psi,\omega \rho)=\frac{1}{\pi}(\psi,\omega)-2(\psi,\omega \rho)\,.$$
We express $\omega$ and $\psi$ both in terms of orthonormal basis $o^k$ corresponding to the weighted norm and the canonical basis $\sin(kx)$ corresponding to the (normalized by $\frac{1}{\pi}$) $L^2$ norm.
$$\omega=\sum_{k \geq 1} (a_k-a_{k+1}) \sin(kx)\,,$$
where we denote $a_0=0$. Therefore 
$$\psi=\sum_{k \geq 1} \frac{a_k-a_{k+1}}{k^2} \sin(kx)\,.$$
Furthermore, we collect
$$\psi=\sum_{k\geq 1} \sum_{j\geq k}\frac{a_j-a_{j+1}}{j^2} o^k\,.$$
Therefore we can compute explicitly that 
$$-2( \cos x\psi,\omega \rho)=\sum_{k \geq 1} \frac{(a_k-a_{k+1})^2}{k^2}-2\sum_{k\geq 1} a_k\sum_{j\geq k}\frac{a_j-a_{j+1}}{j^2}\,.$$

We use integration by parts similar to Lemma \ref{dampl} to obtain 
$$2[(\sin x\psi,u_x\rho)+ (u \cos{x},\omega \rho)]=2(\cos{x}\omega+\psi-\sin x\psi_x,u\rho)\coloneqq2(T_u,u\rho)\,.$$
We further have $$\begin{aligned}
T_u&=\sum_{k \geq 1} (a_k-a_{k+1})[\frac{k+1}{2k}\sin((k-1)x)+\frac{k-1}{2k}\sin((k+1)x)+\frac{1}{k^2}\sin(kx)]\\&=\sum_{k \geq 1} \sin(kx) [\frac{k+2}{2(k+1)}(a_{k+1}-a_{k+2})+\frac{a_{k}-a_{k+1}}{k^2}+\frac{k-2}{2(k-1)}(a_{k-1}-a_{k})]\\&=\sum_{k\geq 1}[\frac{k-2}{2(k-1)}a_{k-1}+(\frac{1}{2k(k-1)}+\frac{1}{k^2})a_{k}+(\frac{k+1}{2k}+\frac{1}{(k+1)^2}-\frac{1}{k^2})a_{k+1}\\&+\sum_{j>k+1}(\frac{1}{j^2}-\frac{1}{(j-1)^2})a_j]o^k\,,  
\end{aligned}$$
where the terms involving $\frac{1}{k-1}$ in the summand is regarded as $0$ for $k=1$. Therefore we collect explicitly that 
$$\begin{aligned}
dE_1&=\sum_{k\geq 1}\{ -(a_k^2+b_k^2+c_k^2)+ \frac{(a_k-a_{k+1})^2}{k^2}-2 a_k\sum_{j\geq k}\frac{a_j-a_{j+1}}{j^2}+2b_k[\frac{k-2}{2(k-1)}a_{k-1}\\&+(\frac{1}{2k(k-1)}+\frac{1}{k^2})a_{k}+(\frac{k+1}{2k}+\frac{1}{(k+1)^2}-\frac{1}{k^2})a_{k+1}+\sum_{j>k+1}(\frac{1}{j^2}-\frac{1}{(j-1)^2})a_j]\}\,.\end{aligned}$$
Substituting \eqref{bc} into the above and we can simplify
$$
\begin{aligned}
dE_1&=-\sum_{k \geq 1}\{ a_k^2(1+\frac{1}{k^2}-\frac{1}{(k-1)^2})+c_k^2(1+\frac{1}{k(k+1)})+ 2a_k a_{k+1}\frac{1}{(k+1)^2}\\&+2 a_k \sum_{j>k+1}a_j(\frac{1}{j^2}-\frac{1}{(j-1)^2})+2a_kc_k\frac{1+2k-k^2}{2k^2(k+1)}+2a_{k+1}c_k\frac{k^2-k-1}{2k^2(k+1)^2}\\&-2a_{k+2}c_k\frac{k+2}{2(k+1)^2}+\sum_{j>k}2a_k c_j\frac{1}{j(j+1)}\}\,.\end{aligned}$$
After this explicit computation, we notice that the damping estimate in Proposition \ref{prop d} can be cast into an estimate of a quadratic form; see \eqref{dample}, which is equivalent to a lower bound on the eigenvalues of an infinite-dimensional symmetric matrix. 
\begin{equation}
    \label{dample}\begin{aligned}
 F(a,c)&\coloneqq \sum_{k\geq1}\{ a_k^2(0.84+\frac{1}{k^2}-\frac{1}{(k-1)^2})+c_k^2(0.84+\frac{1}{k(k+1)})+ 2a_k a_{k+1}\frac{1}{(k+1)^2}\\&+2 a_k \sum_{j>k+1}a_j(\frac{1}{j^2}-\frac{1}{(j-1)^2})+2a_kc_k\frac{1+2k-k^2}{2k^2(k+1)}+2a_{k+1}c_k\frac{k^2-k-1}{2k^2(k+1)^2}\\&-2a_{k+2}c_k\frac{k+2}{2(k+1)^2}+\sum_{j>k}2a_k c_j\frac{1}{j(j+1)}\}\geq0\,.\end{aligned}\end{equation}

We notice that the entries decay fast. {Therefore the strategy to prove \eqref{dample} is to combine a computer-assisted estimate of the eigenvalues of its finite truncation with a decay estimate of the remaining part. We will defer the proof of \eqref{dample} to the Appendix, see Lemma \ref{cap} and the proof}. Thereby we conclude the linear estimate.
\end{proof}

\section{Nonlinear estimates and convergence to self-similar profile}
\label{nonlinearest}
\subsection{Nonlinear stability}
By Proposition \ref{prop d} and equation \eqref{1drc}, we have \begin{equation}
\label{nonlinear}
    \begin{aligned}\frac{1}{2}\frac{d}{d\tau}E^2(\tau)&\leq -0.16E^2(\tau)+(a-1)[((L'_1)_x,u_x\rho)+(L'_2,\omega\rho)]\\&+((N_1)_x,u_x\rho)+(N_2,\omega\rho)+((F_1)_x,u_x\rho)+(F_2,\omega\rho)\,.\end{aligned}
\end{equation}
We first provide some estimates about the weighted $L^2$ norm and $L^{\infty}$ norm of some lower-order terms.
\begin{lemma}
\label{lotb}
The following estimates hold
\begin{enumerate}
    \item Weighted $L^2$ norm: $$\|\psi\|_{\rho},\|\psi_x-\psi_x(0)\|_{\rho},\|u\|_{\rho}\lesssim E\,.$$
    \item $L^{\infty}$ norm: $$\|\psi_x\|_{\infty},\|\frac{\psi}{\sin x}\|_{\infty},\|u\|_{\infty}\lesssim E\,.$$
\end{enumerate}
   
\end{lemma}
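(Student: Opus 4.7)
The plan is to combine Hardy-type inequalities in the singular weight $\rho$ with the Fourier expansions $\omega=\sum a_k o^k$ and $u_x=\sum c_k e^k$ already set up in Section \ref{stab main}, using that $\sum a_k^2+\sum c_k^2\le 2E^2$. Since $\rho(x)\sim \frac{1}{\pi x^2}$ as $x\to 0$ and is bounded away from the origins, all estimates reduce to a local analysis near $x=0$ (and by symmetry near $x=2\pi$), so I will treat each bound on $[0,\pi]$ and extend by the evenness/oddness of the functions involved.

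For the weighted $L^2$ bounds, I would first handle $\|u\|_\rho$ using Hardy. Because $u$ is odd we have $u(0)=0$, so integration by parts on $\int_0^\pi u^2/x^2\,dx$ (with vanishing boundary contribution since $u(x)^2/x\to 0$) yields $\int_0^\pi u^2/x^2\,dx\le 4\int_0^\pi u_x^2\,dx\le 4\pi^2\int_0^\pi u_x^2/x^2\,dx$, which together with the equivalence $\rho\asymp 1/x^2$ on $[0,\pi]$ gives $\|u\|_\rho\lesssim\|u_x\|_\rho\lesssim E$. Next, for $\|\psi_x-\psi_x(0)\|_\rho$, note that $\psi_x-\psi_x(0)$ is even and vanishes at $0$, and by the Biot--Savart law $\psi_x(x)-\psi_x(0)=-\int_0^x\omega(y)\,dy$. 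Applying the same Hardy inequality to this primitive, and using the elementary fact $\int_0^{2\pi}\omega^2\le 4\pi\int\omega^2\rho$ (since $1/\rho\le 4\pi$), I obtain $\|\psi_x-\psi_x(0)\|_\rho\lesssim \|\omega\|_\rho\lesssim E$. For $\|\psi\|_\rho$, since $\psi$ is odd with $\psi(0)=0$, the mean value theorem yields $|\psi(x)|\le |x|\,\|\psi_x\|_\infty$, and then $\int x^2\rho\,dx<\infty$ reduces the estimate to $\|\psi_x\|_\infty\lesssim E$, proved in the next step.

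For the $L^\infty$ bounds, the cornerstone is $\|\psi_x\|_\infty\lesssim E$. From $-\psi_{xx}=\omega$, the Fourier sine series of $\omega$ gives $\psi_x(x)=\sum_{k\ge 1}\frac{a_k-a_{k+1}}{k}\cos(kx)$, so $\|\psi_x\|_\infty\le \sum_k \frac{|a_k-a_{k+1}|}{k}\le\bigl(\sum 1/k^2\bigr)^{1/2}\bigl(\sum (a_k-a_{k+1})^2\bigr)^{1/2}\lesssim \|a\|_{\ell^2}\lesssim E$. Then $\|\psi/\sin x\|_\infty\lesssim \|\psi_x\|_\infty\lesssim E$ follows from $|\psi(x)|\le|x|\,\|\psi_x\|_\infty$ together with $|\sin x|\ge\tfrac{2}{\pi}|x|$ on $[-\pi/2,\pi/2]$, extended to a neighborhood of $\pi$ by the symmetry $\psi(\pi-y)=-\psi(-\pi+y)$ (and $\psi(\pi)=0$ by oddness and $2\pi$-periodicity). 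Finally, for $\|u\|_\infty$, expanding $u=\sum d_k\sin(kx)$ and matching with $u_x=\sum c_k e^k=\sum_{k\ge 1}(c_k-c_{k-1})\cos(kx)$ (with $c_0=0$) gives $d_k=(c_k-c_{k-1})/k$, hence $\|u\|_\infty\le\sum|d_k|\lesssim \sum |c_k|/k\le\bigl(\sum 1/k^2\bigr)^{1/2}\|c\|_{\ell^2}\lesssim E$.

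The main subtlety I expect is the bound for $\|\psi_x-\psi_x(0)\|_\rho$: $\psi_x(0)$ is a nonlocal quantity, and the weight $\rho$ is singular precisely where $\psi_x-\psi_x(0)$ is forced to vanish, so one must exploit the cancellation exactly. The Biot--Savart representation plus Hardy is the cleanest way, but an alternative more aligned with the rest of the paper would be to expand $\psi_x-\psi_x(0)=-\sum_{k\ge 1}\frac{a_k-a_{k+1}}{k}\sum_{j=0}^{k-1}e^j$ in the orthonormal basis (using $1-\cos(kx)=\sum_{j=0}^{k-1}e^j$), and then recognize the resulting quadratic form $\sum_{k,k'}\frac{(a_k-a_{k+1})(a_{k'}-a_{k'+1})}{\max(k,k')}$ as a bounded Hilbert-type form on $\ell^2$. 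Either route yields the desired $O(E)$ bound and completes the proof.
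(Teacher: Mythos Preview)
Your proof is correct, but for part (1) it follows a genuinely different route than the paper. The paper works entirely in the orthonormal bases $\{o^k\},\{e^k\}$: it reads off $\|u\|_\rho^2=\sum b_k^2$ and compares directly with $\|u_x\|_\rho^2=\sum c_k^2$ via the algebraic relation~\eqref{bc}, and it computes $\|\psi_x-\psi_x(0)\|_\rho^2=\sum_{k\ge1}\bigl(\sum_{j\ge k}\tfrac{a_j-a_{j+1}}{j}\bigr)^2$ and bounds the resulting double sum by Cauchy--Schwarz. Your argument instead exploits that $\rho\asymp x^{-2}$ on $[0,\pi]$ and reduces everything to the classical Hardy inequality $\int_0^\pi f^2/x^2\le 4\int_0^\pi (f')^2$ for functions with $f(0)=0$; this is cleaner and avoids any Fourier bookkeeping for part (1). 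The alternative you sketch at the end---expanding $\psi_x-\psi_x(0)$ in the $e^j$ basis via $1-\cos(kx)=\sum_{j=0}^{k-1}e^j$---is exactly what the paper does. For part (2) the two proofs are closer: both use the absolutely convergent Fourier series of $\psi_x$ to get $\|\psi_x\|_\infty$, and the mean value argument for $\psi/\sin x$ is the same. The only minor divergence is that the paper bounds $\|u\|_\infty$ and $\|\psi_x-\psi_x(0)\|_\infty$ by $\|u_x\|_1\lesssim\|u_x\|_\rho$ (Cauchy--Schwarz against the bounded weight $1/\rho$), whereas you again pass through Fourier coefficients; both are one-line arguments.
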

\begin{proof}
    For (1), we use the setting of the Fourier series approach as in the proof of Proposition \ref{prop d} and pick up the notation there.
    $$\begin{aligned}
    \|\psi_x-\psi_x(0)\|_{\rho}^2&=\sum_{k \geq 1} (\sum_{j\geq k}\frac{a_j-a_{j+1}}{j})^2\leq\sum_{k \geq 1} \sum_{j\geq k}a^2_j(\frac{1}{k^2}+\sum_{j>k}(\frac{1}{j}-\frac{1}{j+1})^2)\\&\lesssim\sum_{j \geq 1} a_j^2\sum_{k \geq 1} \frac{1}{k^2}\lesssim\|\omega\|_{\rho}^2\,,\end{aligned}$$
    where we have used the Cauchy-Schwarz inequality. We can similarly estimate $\|\psi\|_{\rho}$. Then we get
    $$\|u\|_{\rho}^2=\sum_{j \geq 1} b_j^2=\sum_{j \geq 1} \frac{1}{j(j+1)}c_j^2\leq\sum_{j \geq 1} c_j^2=\|u_x\|_{\rho}^2\,.$$
    For (2), we first compute using Fourier series similar to (1) $$\psi_x(0)=\sum_{j \geq 1}\frac{a_j-a_{j+1}}{j}\lesssim\|\omega\|_{\rho}\,.$$
    Next, we estimate $$\|\psi_x-\psi_x(0)\|_{\infty}\lesssim\|\psi_{xx}\|_{1}\lesssim\|\omega\|_{\rho}\,.$$
Similarly, we obtain the estimate for $\|u\|_{\infty}$.
    For $\|\frac{\psi}{\sin x}\|_{\infty}$, since $\psi$ is odd and periodic, we have $\psi(\pi)=\psi(0)=0$ and only need to estimate this norm in $[0,\pi]$. Since $\sin x\geq \frac{2}{\pi}\min\{x,\pi-x\}$ in $[0,\pi]$,
    we have $\|\frac{\psi}{\sin x}\|_{\infty}\lesssim\|\psi_x\|_{\infty}$ by Lagrange's mean value theorem.
\end{proof}
Combined with the damping in Lemma \ref{dampl}, we further obtain 
$$\begin{aligned}((L'_1)_x,u_x\rho)&=2(-\cos xu_x-\sin xu_{xx}+\sin x\psi-\cos x\psi_x+u_x+\psi_x(0)\cos x,u_x\rho)\\&\lesssim E^2+(\|\psi\|_{\rho}+\|\psi_x-\psi_x(0)\|_{\rho})E\lesssim E^2\,,\end{aligned}$$
$$(L'_2,\omega\rho)=2(-\sin x\omega_x-\cos x\psi+\omega+\psi_x(0)\sin x,\omega\rho)\lesssim E^2+(\|\psi\|_{\rho}+|\psi_x(0)|)E\lesssim E^2\,.$$
$$((F_1)_x,u_x\rho)\lesssim |a-1|E\,,\quad(F_2,\omega\rho)\lesssim |a-1|E\,,$$
$$\begin{aligned}((N_1)_x,u_x\rho)&=2(-\omega u+(1-a)(\psi_x-\psi_x(0))u_{x}-a\psi u_{xx},u_x\rho)\\&\lesssim E^2(\|\psi_x\|_{\infty}+\|u\|_{\infty})+|(u_x^2,(\psi\rho)_x)|\\&\lesssim E^2(\|\psi_x\|_{\infty}+\|u\|_{\infty}+\|\frac{\psi}{\sin x}\|_{\infty})\lesssim E^3\,,\end{aligned}$$
$$(N_2,\omega\rho)=2((a-1)\psi_x(0)\omega+uu_x-a\psi\omega_x,\omega\rho)\lesssim E^2(\|\psi_x\|_{\infty}+\|u\|_{\infty}+\|\frac{\psi}{\sin x}\|_{\infty})\lesssim E^3\,.$$
Therefore we have \begin{equation}
\label{nonl-collected}
    \frac{d}{d\tau}E(\tau)\leq -(0.16-C|a-1|)E+C|a-1|+CE^2\,.
\end{equation}
We can perform the standard bootstrap argument to show that there exist absolute constants $\delta,C>0$ such that if $|a-1|<\delta$ and $E(0)<C|a-1|$, then we have $E(\tau)<C|a-1|$ for all time. In particular $c_u=O(|a-1|^2)$ and $c_u+\bar{c}_u<0$. Therefore we prove that the solution blows up in finite time.
\subsection{Estimates using a higher-order Sobolev norm}
\label{h2sec}
In order to establish convergence of the solution to a steady state, we need to estimate weighted norms of $u_t$ and $\omega_t$. As was pointed out in \cite{chen2021slightly}, we need to provide stability estimates of the equation in higher-order Sobolev norms to close the estimate. In particular, we choose $$K^2(\tau)=\|D_xu_x\|_{\rho}^2+\|D_x\omega\|_{\rho}^2\,,$$
where we denote $D_x$ to be the operator $\sin x\partial x$.

\begin{remark}
This choice of weighted  norms is again motivated by the local linear damping estimates. We recall that the leading order terms of the local terms in the linearized operators $(L_1)_x$, $L_2$ are $-2D_xu_x$ and $-2D_xw$, and we have $2(D_x f,f\rho)=(f,f\rho)$. Therefore in this new weighted norm, the combined terms would again give damping \begin{equation}\label{highd}
    (-2D_x D_x u_x,D_x u_x\rho)+(-2D_xD_x w,D_x w\rho)=-K^2\,.
\end{equation}
\end{remark}
We now obtain
$$\begin{aligned}\frac{1}{2}\frac{d}{d\tau}K^2(\tau)&\leq (D_x(L_1)_x,D_xu_x\rho)+(D_xL_2,D_x\omega\rho)+(D_x(N_1)_x,D_xu_x\rho)\\&+(D_xN_2,D_x\omega\rho)+(a-1)[(D_x(L'_1)_x,D_xu_x\rho)+(D_xL'_2,D_x\omega\rho)]\\&+(D_x(F_1)_x,D_xu_x\rho)+(D_xF_2,D_x\omega\rho)\,.\end{aligned}$$
We will denote the terms that have $\|\cdot\|_{\rho}$ norm bounded by $E$ as $\textit{l.o.t.}$. The bound
$$\|D_x[fg]\|_{\rho}\lesssim (\|f_x\|_{2}+\|f\|_{2})\quad \text{for}\, g=1,\cos x,\sin x$$
combined with the oddness of $\psi$ and $u$ would imply that $D_x[fg]$ is $\textit{l.o.t.}$ for $f=\psi,\psi_x,u$ and $g=\sin x,\cos x,1$.
Therefore combined with \eqref{highd}, we have the following estimate for the main term $$dK_1\coloneqq(D_x(L_1)_x,D_xu_x\rho)+(D_xL_2,D_x\omega\rho)\,,$$ $$\begin{aligned}dK_1&\leq -K^2-2(D_x[\sin x \omega],D_xu_x\rho)+2(D_xD_xu,D_x\omega\rho)+CEK\\&=-K^2-(\sin 2x\omega,D_xu_x\rho)+(\sin 2x u_x,D_x\omega\rho)+CEK\\&\leq-K^2+CEK\,,\end{aligned}$$
where we have again used a crucial cancellation in the equality, similar to that of $dE_1$ in Subsection \ref{stab main}. We estimate the rest of the terms similar to the nonlinear stability estimates in \eqref{nonlinear}.
$$(D_x(L'_1)_x,D_xu_x\rho)+(D_xL'_2,D_x\omega\rho)\lesssim K^2+EK\,,$$
$$(D_x(F_1)_x,D_xu_x\rho)+(D_xF_2,D_x\omega\rho)\lesssim|a-1|K\,,$$
$$\begin{aligned}(D_x(N_1)_x,D_xu_x\rho)&\lesssim EK^2+|(-2wD_xu+2(1-a)D_x\psi_x u_x-2a\psi D_xu_{xx},D_xu_x\rho)|\\&\lesssim EK^2+\|\sin xu_x\|_{\infty}EK+|(u^2_{xx},(\psi\sin^2x\rho)_x)|\\&\lesssim EK(K+E)+K^2\|\frac{\psi}{\sin x}\|_{\infty}\lesssim EK(K+E)\,,\end{aligned}$$
$$(D_xN_2,D_x\omega\rho)\lesssim EK^2+|(2D_xu u_x-2a\psi D_x\omega_{x},D_x\omega\rho)|\lesssim EK(K+E)\,,$$
where we have used integration by parts and the estimate $$\|\sin xu_x\|_{\infty}\lesssim\|\sin xu_{xx}\|_{1}+\|\cos xu_{x}\|_{1}\lesssim\|D_xu_{x}\|_{\rho}+\|u_{x}\|_{\rho}\lesssim E+K\,.$$

We can finally prove that $$\frac{d}{d\tau}K(\tau)\leq -(1-C|a-1|)K+CE+C|a-1|+CE(E+K)\,.$$
Therefore combined with \eqref{nonl-collected}, we can find an absolute constant $\mu>1$ such that $$\frac{d}{d\tau}(K+\mu E)\leq-(0.1-C|a-1|)(K+\mu E)+C|a-1|+C(K+\mu E)^2\,.$$
By using a standard bootstrap argument, there exist absolute constants $\delta_0<\delta,C>0$, if $|a-1|<\delta_0$ and $K(0)+\mu E(0)<C|a-1|$, then $K(\tau)+\mu E(\tau)<C|a-1|$ for all time.
\subsection{Convergence to the steady state}
\label{sec ct}
We estimate the weighted norm of $\omega_\tau$ and $u_{\tau,x}$ and then use the standard convergence in time argument as in \cite{chen2021finite,chen2021slightly}.
$$J^2(\tau)=\frac{1}{2}((u_{\tau,x}^2,\rho)+(\omega_{\tau}^2,\rho))\,.$$
Applying the estimates of $\frac{d}{d\tau}E$ to $\frac{d}{d\tau}J$, we can get damping for the linear parts, and the small error terms corresponding to $\bar{\omega}$ and $\bar{u}$ vanishes. Therefore we yield
$$\frac{1}{2}\frac{d}{d\tau}J^2\leq-(0.16-C|a-1|)J^2+((N_1)_{\tau,x},u_{\tau,x}\rho)+((N_2)_\tau,\omega_t\rho)\,.$$ 
 Using estimates similar to Lemma \ref{lotb} and nonlinear estimates in \eqref{nonlinear}, we get
$$((N_1)_{\tau,x},u_{\tau,x}\rho)\lesssim EJ^2+(\psi_\tau u_{xx},u_{\tau,x}\rho)\lesssim EJ^2+J^2\|u_{xx}\sin x\|_{\rho}\leq(E+K)J^2\,.$$
$$((N_2)_{\tau},\omega_\tau\rho)\lesssim EJ^2+(\psi_\tau \omega_{x},\omega_{\tau}\rho)\lesssim EJ^2+J^2\|\omega_{x}\sin x\|_{\rho}\leq(E+K)J^2\,.$$
Combined with the a priori estimates on $E+K$, we can establish exponential convergence of $J$ to zero. Then we can use the same argument as in \cite{chen2021finite,chen2021slightly} to establish exponential convergence to the steady state and conclude the proof of Theorem \ref{t1}.
\section{Blowup of the original model with H\"older continuous data}
\label{sec hol}
In this section, we follow the strategy of the linear and nonlinear estimates of the weak advection model in Sections \ref{linearest} and \ref{nonlinearest}, and establish blowup of $C^{\alpha}$ data for the original model \eqref{1dwp} with $a=1$. Here $\alpha<1$ is close to $1$. Many of the ideas are drawn from the paper \cite{chen2021regularity} and we only outline the most important steps. Intuitively, $C^{\alpha}$ regularity of the profile weakens the advection and therefore contributes to a blowup in finite time.
\subsection{Dynamic rescaling formulation around the approximate steady state}
Before we start, we will solve the Biot-Savart law of recovering $\psi$ from $\omega$ with odd symmetry.
\begin{lemma}
\label{alpha est}
    Suppose that $\omega$, $\psi$ are odd and periodic on $[-\pi,\pi]$, with $-\psi_{xx}=\omega$. Then we solve $\psi_x(0)=-\frac{1}{2\pi}\int_{0}^{2\pi}y{\omega}(y)$ and obtain \begin{equation}\label{bs-lawsolved}
        \psi=\int_{0}^{x}(y-x){\omega}(y)dy+x\psi_x(0)\,.
    \end{equation}
\end{lemma}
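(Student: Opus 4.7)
The plan is to integrate the Poisson equation $-\psi_{xx}=\omega$ twice and then use the odd/periodic boundary conditions to pin down the single remaining constant $\psi_x(0)$.

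First I would integrate once from $0$ to $x$ to obtain $\psi_x(x)=\psi_x(0)-\int_0^x\omega(y)\,dy$, then integrate a second time from $0$ to $x$. Since $\psi$ is odd, $\psi(0)=0$, so this gives
\[
\psi(x)=x\,\psi_x(0)-\int_0^x\!\int_0^s\omega(y)\,dy\,ds.
\]
Swapping the order of integration in the double integral rewrites it as $\int_0^x(x-y)\omega(y)\,dy$, which yields exactly the claimed formula \eqref{bs-lawsolved} once $\psi_x(0)$ is determined.

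To compute $\psi_x(0)$, I would impose periodicity at the endpoint: since $\psi$ is odd and $2\pi$-periodic, $\psi(2\pi)=\psi(0)=0$. Evaluating the formula above at $x=2\pi$ gives
\[
0=2\pi\,\psi_x(0)+\int_0^{2\pi}(y-2\pi)\omega(y)\,dy,
\]
so $2\pi\,\psi_x(0)=\int_0^{2\pi}(2\pi-y)\omega(y)\,dy$. The term $2\pi\int_0^{2\pi}\omega\,dy$ vanishes: by $2\pi$-periodicity this equals $\int_{-\pi}^{\pi}\omega\,dy$, which is zero because $\omega$ is odd. This leaves $\psi_x(0)=-\frac{1}{2\pi}\int_0^{2\pi}y\,\omega(y)\,dy$, as claimed.

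The only thing to watch is the consistency check that the two boundary conditions (odd plus periodic) are automatically compatible with $-\psi_{xx}=\omega$ for odd $\omega$; this is guaranteed precisely by the vanishing of the mean of $\omega$, which I already use. There is no real obstacle here — the lemma is essentially a Green's-function representation on $[0,2\pi]$ adapted to the odd-periodic setting, and the proof is a direct two-step integration followed by solving one linear equation for $\psi_x(0)$.
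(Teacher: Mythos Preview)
Your proof is correct and is exactly the approach the paper indicates: the paper states only that the lemma ``is straightforward by integration in $x$,'' and your two integrations together with the odd/periodic boundary conditions carry this out in full.
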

The proof of this lemma is straightforward by integration in $x$.

We construct the following approximate steady state with $C^{\alpha}$ regularity for \eqref{1drf}.
$$\bar{\omega}_{\alpha}=sgn(x)|\sin x|^{\alpha}\,,\quad\bar{u}_{\alpha}=sgn(x)|\sin x|^{\frac{1+\alpha}{2}}\,,\quad \bar{c}_{u, {\alpha}}=(\alpha-1)\bar{\psi}_{{\alpha},x}(0)\,,$$
where $\bar{\psi}_{\alpha}$ is related to $\bar{\omega}_{\alpha}$ via \eqref{bs-lawsolved}.
We consider odd perturbations $u$, $\omega$, $\psi$. The odd symmetry of the solution is preserved in time by equation \eqref{1drf}.  We will use the normalization condition as $c_u=(\alpha-1){\psi}_x(0)$, which ensures that $u$ vanishes to a higher order at all times so that we can use the same singular weight $\rho$. In fact we compute using \eqref{1drf} and the normalization conditions that $$\lim_{x\to 0}\frac{\bar{u}_{{\alpha}}(x)+u(x,\tau)}{x^{\frac{1+\alpha}{2}}}=\lim_{x\to 0}\frac{\bar{u}_{{\alpha}}(x)+u(x,0)}{x^{\frac{1+\alpha}{2}}}\,.$$ 
Therefore if we make the initial perturbation $u(x,0)$ vanish to order $1+\alpha$ around the origin,  $u(x,\tau)$ will also vanish to order $1+\alpha$ for all time. 

Now similar to what we obtain in \eqref{1drfr}, the perturbations satisfy the following system
\begin{equation}
\label{1drfr-alpha}
\begin{aligned}
{u}_{ \tau} &=L_1+R_{1,\alpha}+N_{1,\alpha}+F_{1,\alpha}\,, \\
{\omega}_{ \tau}&=L_2+R_{2,\alpha}+N_{2,\alpha}+F_{2,\alpha}\,, \\
-{\psi}_{xx} &={\omega}\,,
\end{aligned}
\end{equation}
where we extract the same leading order linear parts  as in \eqref{1drc}, while the nonlinear and error terms change and the residual error terms $R_{1,\alpha}$, $R_{2,\alpha}$ model the discrepancy between our approximate profile with $C^{\alpha}$ regularity and the steady state profile $\bar{\omega}=\bar{u}=\bar{\psi}=\sin x$. Define $\psi_{\text{res}}=\bar{\psi}_{\alpha}-\bar{\psi},\omega_{\text{res}}=\bar{\omega}_{\alpha}-\bar{\omega},u_{\text{res}}=\bar{u}_{\alpha}-\bar{u}$. We can express $R_{i,\alpha}$ and $F_{i,\alpha}$ as follows
$$R_{1,\alpha}=-2 \psi_{\text{res}} u_x-2 u_{\text{res},x} \psi+ 2 u \psi_{\text{res},x}+2u_{\text{res}} \psi_x+\bar{c}_{u,\alpha} u+ c_u\bar{u}_\alpha
\,.\,$$ 
$$R_{2,\alpha}=-2\psi_{\text{res}} \omega_x-2\omega_{\text{res},x}\psi +2 u u_{\text{res},x}+2u_{\text{res}} u_x+\bar{c}_{u,\alpha} \omega+c_u\bar{\omega}_{\alpha}\,,$$
$$N_{1,\alpha}=(c_u+2\psi_x)u-2\psi u_x\,,\quad N_{2,\alpha}=c_u\omega+2uu_x-2\psi \omega_x\,,$$
$$F_{1,\alpha}=(\bar{c}_{u,\alpha}+2\bar{\psi}_{\alpha,x})\bar{u}_{\alpha}-2\bar{\psi}_{\alpha} \bar{u}_{\alpha,x}\,,\quad F_{2,\alpha}=\bar{c}_{u,\alpha}\bar{\omega}_{\alpha}+2\bar{u}_{\alpha}\bar{u}_{\alpha,x}-2\bar{\psi}_{\alpha} \bar{\omega}_{\alpha,x}\,.$$
Before we perform our energy estimates, we will obtain some basic estimates of the residues.
\begin{lemma}
\label{residue-lemma}
The following estimates hold for $\kappa=\frac{7}{8}<\frac{9}{10}<\alpha<1$.
\begin{enumerate}
    \item Pointwise estimates of the residues:
    $$|\partial_x^i\omega_{\text{res}}|+|\partial_x^i u_{\text{res}}|\lesssim |\alpha-1||\sin x|^{\kappa-i}\,,\quad i=0,1,2,3,$$ $$\|\psi_{\text{res}}\|_{\infty}+\|\psi_{\text{res},x}\|_{\infty}\lesssim |\alpha-1|\,.$$
    \item Refined estimates using cancellations:
    $$|\frac{\alpha-1}{2}\bar{u}_{\alpha,x}-\sin xu_{\text{res},xx}|+|\sin x\partial x[\frac{\alpha-1}{2}\bar{u}_{\alpha,x}-\sin xu_{\text{res},xx}]|\lesssim |\alpha-1|^{1/2}|x||\sin x|^{\alpha-1}\,.$$
\end{enumerate}
\end{lemma}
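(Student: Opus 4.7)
\emph{Plan.} The two parts of the lemma are essentially explicit computations; the key is to set them up so that the singular behavior of the profiles near $x=0$ is tracked carefully. The plan is to handle (1) by reducing everything to scalar estimates on $y\mapsto y^\beta$ for $\beta$ close to $1$, via a case split on whether $(1-\alpha)|\ln|\sin x||$ is moderate or not, and to handle (2) by exhibiting an explicit algebraic cancellation that yields the $|\alpha-1|$-gain, from which the desired bound follows by the same case split.

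\emph{Part (1).} For $x\in(0,\pi)$ we have the explicit representations
$$\omega_{\text{res}}(x)=\sin x\cdot\bigl[(\sin x)^{\alpha-1}-1\bigr],\qquad u_{\text{res}}(x)=\sin x\cdot\bigl[(\sin x)^{(\alpha-1)/2}-1\bigr].$$
I split into two regimes. When $|\sin x|\ge e^{-1/(1-\alpha)}$, the exponent $(\alpha-1)\ln|\sin x|$ lies in a bounded interval, and the mean value theorem gives $|(\sin x)^{\alpha-1}-1|\lesssim|\alpha-1|\cdot|\ln\sin x|$; since $y^{1-\kappa}|\ln y|$ is bounded on $(0,1]$, we obtain $|\omega_{\text{res}}|\lesssim|\alpha-1|\cdot|\sin x|^\kappa$. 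When $|\sin x|<e^{-1/(1-\alpha)}$, I estimate crudely via $|\omega_{\text{res}}|\le 2(\sin x)^\alpha=2(\sin x)^\kappa(\sin x)^{\alpha-\kappa}$; since $(\alpha-\kappa)/(1-\alpha)\gtrsim 1/(1-\alpha)$ for $\alpha\in(9/10,1)$, the factor $(\sin x)^{\alpha-\kappa}\le e^{-(\alpha-\kappa)/(1-\alpha)}$ is super-exponentially small in $1/(1-\alpha)$, hence dominated by $|\alpha-1|$. The derivative estimates for $i=1,2,3$ follow analogously, using Leibniz and noting that each differentiation of $(\sin x)^\alpha$ yields an additional $|\sin x|^{-1}$ factor matching the $|\sin x|^{\kappa-i}$ on the right-hand side. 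The bounds on $\psi_{\text{res}}$ and $\psi_{\text{res},x}$ follow by integrating the pointwise bound on $\omega_{\text{res}}$ through the Biot--Savart representation \eqref{bs-lawsolved} together with $\psi_{\text{res},x}(0)=-\frac{1}{2\pi}\int_0^{2\pi}y\,\omega_{\text{res}}(y)\,dy$.

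\emph{Part (2), the main obstacle.} Using $u_{\text{res},xx}=\bar u_{\alpha,xx}+\sin x$ together with direct differentiation of $\bar u_\alpha=\mathrm{sgn}(x)|\sin x|^{(1+\alpha)/2}$, one obtains the explicit identity
$$\tfrac{\alpha-1}{2}\bar u_{\alpha,x}-\sin x\,u_{\text{res},xx} = \tfrac{(\alpha-1)(1+\alpha)}{4}\cos x(1-\cos x)\,|\sin x|^{(\alpha-1)/2}\mathrm{sgn}(x) + \sin^2 x\Bigl[\tfrac{1+\alpha}{2}|\sin x|^{(\alpha-1)/2}-1\Bigr].$$
The first summand carries an explicit factor $|\alpha-1|$ and is bounded pointwise by $|\alpha-1|\cdot|x|^{(\alpha+3)/2}$ near $x=0$, which is dominated by $|\alpha-1|^{1/2}|x|(\sin x)^{\alpha-1}$ on $[-\pi,\pi]$. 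The bracket in the second summand vanishes at $\alpha=1$; applying the case split of part (1) to the factor $|\sin x|^{(\alpha-1)/2}-1$ yields $O(|\alpha-1|\cdot|\ln\sin x|)$ in the moderate regime and a super-exponentially small constant in the tiny regime, so the $\sin^2 x$ prefactor together with the $\sqrt{|\alpha-1|}$ slack on the right-hand side absorbs the logarithmic loss. The derivative estimate involving $\sin x\,\partial_x[\cdot]$ is obtained by differentiating the above identity and multiplying by $\sin x$: the crucial observation is that each $\partial_x$ acting on $|\sin x|^{(\alpha-1)/2}$ produces a factor $\tfrac{\alpha-1}{2}\cos x|\sin x|^{-1}$ whose singularity is cancelled exactly by the outer $\sin x$, so the differentiated expression has the same structural form and is handled by the same case split. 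The main technical subtlety throughout is balancing the $|\alpha-1|$-smallness extracted from the algebraic cancellation against the logarithmic blowup of $y^{\alpha-1}-1$ and the singular weight $(\sin x)^{\alpha-1}$; the $\sqrt{|\alpha-1|}$ on the right-hand side provides exactly the slack required to absorb the logarithmic loss, which is why this is the natural form of the refined estimate.
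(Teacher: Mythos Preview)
Your proposal is correct and considerably more detailed than the paper's own proof, which simply says that parts (1) and (2) follow by direct calculation and refers to Lemma~6.1 of \cite{chen2021regularity}, deriving the $\psi_{\text{res}}$ bounds from Lemma~\ref{alpha est} exactly as you do. Your explicit algebraic identity for $\tfrac{\alpha-1}{2}\bar u_{\alpha,x}-\sin x\,u_{\text{res},xx}$ is precisely the cancellation the lemma alludes to, and the structural observation that $\sin x\,\partial_x$ preserves the form of the expression is the right way to handle the derivative part of (2).

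One streamlining remark: the two-regime split in part~(1) is not actually needed. Using the sharper inequality $e^{t}-1\le te^{t}$ for $t\ge 0$ with $t=(1-\alpha)|\ln y|$ gives directly
\[
0\le y^{\alpha-1}-1\le (1-\alpha)|\ln y|\,y^{\alpha-1},\qquad y\in(0,1],
\]
so that $|\omega_{\text{res}}|\le (1-\alpha)|\ln y|\,y^{\alpha}$ with $y=|\sin x|$, and the factor $|\ln y|\,y^{\alpha-\kappa}$ is bounded by $(e(\alpha-\kappa))^{-1}\le 40/e$ uniformly on $(0,1]$ and in $\alpha\in(9/10,1)$. This avoids having to check that the ``super-exponentially small'' constant in the tiny regime really is $O(|\alpha-1|)$ uniformly on the whole interval $(9/10,1)$ (it is, with constant roughly $15$, but the argument becomes a one-liner without the split). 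The same simplification applies to the bracketed factor $\tfrac{1+\alpha}{2}|\sin x|^{(\alpha-1)/2}-1$ in part~(2).
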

\begin{proof}
The first part of (1) and (2) can be proved by using direct calculations, and we refer to Lemma 6.1 in \cite{chen2021regularity} for details. There seems to be a typo in (6.11) in \cite{chen2021regularity} where $\bar{\omega}_{\alpha}$ should have been $\bar{\omega}_{\alpha,x}$.
    Furthermore, by the expression in Lemma  \ref{alpha est} we get the second part of (1).
\end{proof}
Similar to the weak advection case, we define the energy $E^2(\tau)=\frac{1}{2}((u_x^2,\rho)+(\omega^2,\rho))\,.$ We will estimate the growth of $E(\tau)$.
The leading order linear estimates $L_1, L_2$ can be obtained in Proposition \ref{prop d}.
The estimates for the nonlinear terms $N_{1,\alpha}, N_{2,\alpha}$ follow almost exactly the same as the weak advection case by using Lemma \ref{lotb}.

\subsection{Nonlinear stability}
By the computation in the previous subsection, we get 
\begin{equation*}
\label{nonlinearalpha}
    \frac{1}{2}\frac{d}{d\tau}E^2(\tau)\leq -0.16E^2+CE^3+((R_{1,\alpha})_x,u_x\rho)+(R_{2,\alpha},\omega\rho)+((F_{1,\alpha})_x,u_x\rho)+(F_{2,\alpha},\omega\rho)\,.
\end{equation*}
Further, we get
$$\begin{aligned}
    ((R_{1,\alpha})_x,u_x\rho)&\leq (-2\psi_{\text{res}}u_{xx},u_x\rho)+(c_u\bar{u}_{\alpha,x}-2u_{\text{res},xx}\psi,u_x\rho)\\&+(\|\omega_{\text{res}}\|_{\infty}+\|u_{\text{res}}\|_{\infty}+C|\alpha-1|)E^2\,.
\end{aligned}$$
For the first term, we can use integration by parts and Lemma  \ref{lotb} to obtain
$$CE^2(\|\psi_{\text{res},x}\|_{\infty}+\|\frac{\psi_{\text{res}}}{\sin x}\|_{\infty})\lesssim \|\psi_{\text{res},x}\|_{\infty}E^2\,.$$
For the second term, we compute $$c_u\bar{u}_{\alpha,x}-2u_{\text{res},xx}\psi=\psi_x(0)[(\alpha-1)\bar{u}_{\alpha,x}-2\sin x u_{\text{res},xx}]+2u_{\text{res},xx}(\sin x\psi_x(0)-\psi)\,.$$
Thus by Lemmas  \ref{lotb}  and \ref{residue-lemma}, we have \begin{equation}
    \label{important}
\|c_u\bar{u}_{\alpha,x}-2u_{\text{res},xx}\psi\|_{\rho}\lesssim E|\alpha-1|^{1/2} +|\alpha-1|\|\frac{\sin x\psi_x(0)-\psi}{|\sin x|x}\|_{\infty}\,.\end{equation}
Finally, for $|x|\geq \pi/2$, we have $$|\frac{\sin x\psi_x(0)-\psi}{|\sin x|x}|\lesssim |\psi_x(0)|+\|\frac{\psi}{\sin x}\|_{\infty}\lesssim E\,.$$
For $|x|< \pi/2$, we use Lemma \ref{alpha est} and $|\sin x|\geq 2/\pi |x|$ to obtain
$$|\frac{\sin x\psi_x(0)-\psi}{|\sin x|x}|\lesssim |\frac{\sin x\psi_x(0)-\psi}{x^2}|\leq |\frac{(\sin x-x)\psi_x(0)}{x^2}|+|\frac{\int_{0}^x(y-x)\omega (y)}{x^2}|\lesssim E\,,$$
where we have used the bound $\|\omega/x\|_{1}\lesssim\|\omega/x\|_{2}\lesssim\|\omega\|_{\rho}$ in the last inequality.
Thus we yield
$$((R_{1,\alpha})_x,u_x\rho)\lesssim |\alpha-1|^{1/2}E^2\,.$$
 Similarly we get $$(R_{2,\alpha},\omega\rho)\leq (-2\frac{\psi}{\sin x}\omega_{\text{res},x}\sin x,\omega\rho)+(c_u\bar{\omega}_{\alpha},\omega\rho)+(2uu_{\text{res},x},\omega\rho)+C|\alpha-1|E^2\,.$$
 For the first two terms, we can estimate them using Lemmas  \ref{lotb}  and \ref{residue-lemma}. For the third term, we use Hardy's inequality to derive
 $$\|uu_{\text{res},x}\|_{\rho}/|\alpha-1|\lesssim\|u/x/\sin x\|_{2}\lesssim \|u/x^2\|_{2}+\|u/(\pi-x)\|_{2}\lesssim\|u_x/x\|_{2}+\|u_x\|_{2}\lesssim E\,.$$
 Therefore we have 
 $$(R_{2,\alpha},\omega\rho)\lesssim|\alpha-1|E^2\,.$$
 
 For the error terms, we can just perform standard norm estimates.  We focus on the pointwise estimates for $x\geq0$ and the case $x<0$ follows by using the odd symmetry of the solution.
 $$
\begin{aligned}
    F_{2,\alpha}&=(\alpha-1)\bar{\psi}_{\alpha,x}(0)\sin^{\alpha} x+(\alpha+1)\cos x\sin^{\alpha} x-2\alpha(\psi_\text{res}+\sin x)\cos x\sin^{\alpha-1} x\\&=(\alpha-1)(\bar{\psi}_{\alpha,x}(0)-\cos x)\sin^{\alpha} x-2\alpha\frac{\psi_{\text{res}}}{\sin x}\cos x\sin^{\alpha} x\,.
\end{aligned}
$$
Therefore, we obtain $$\|F_{2,\alpha}\|_{\rho}\lesssim |\alpha-1|+\|\frac{\psi_{res}}{\sin x}\|_{\infty}\lesssim |\alpha-1|\,.$$
Similarly, we have $$
\begin{aligned}
    (F_{1,\alpha})_x&=\frac{\alpha^2-1}{2}\sin^{\frac{\alpha-1}{2}} x\cos x[\bar{\psi}_{\alpha,x}(0)-\bar{\psi}_{\alpha}\frac{\cos x}{\sin x}]+\sin^{\frac{\alpha+1}{2}}x[(\alpha+1)\bar{\psi}_{\alpha}-2\sin^{\alpha}x]\,.
\end{aligned}
$$
Further, we obtain the following estimate: 
$$\|(\alpha+1)\bar{\psi}_{\alpha}-2\sin^{\alpha}x\|_{\infty}\leq |\alpha-1|\|\bar{\psi}_{\alpha}\|_{\infty}+2\|\psi_\text{res}\|_{\infty}+2\|\sin x-\sin^{\alpha}x\|_{\infty}\lesssim|\alpha-1|\,,$$
$$\begin{aligned}\bar{\psi}_{\alpha,x}(0)-\bar{\psi}_{\alpha}\frac{\cos x}{\sin x}&=(1-\cos x)+\psi_{\text{res},x}(0)-\psi_\text{res}\frac{\cos x}{\sin x}\\&=(1-\cos x)(1+\frac{\psi_\text{res}}{\sin x})+\psi_{\text{res},x}(0)-\frac{\psi_\text{res}}{\sin x}\,.\end{aligned}$$
Combined with the estimate similar to that in \eqref{important}, we get 
$$\|[\bar{\psi}_{\alpha,x}(0)-\bar{\psi}_{\alpha}\frac{\cos x}{\sin x}]\rho^{1/2}\|_{\infty}\lesssim\|\frac{1-\cos x}{x}(1+\frac{\psi_\text{res}}{\sin x})\|_{\infty}+\|[\psi_{\text{res},x}(0)-\frac{\psi_\text{res}}{\sin x}]/x\|_{\infty}\lesssim 1\,.$$
Therefore we yield $$\|(F_{1,\alpha})_x\|_{\rho}\lesssim |\alpha-1|\,.$$
Collecting all the estimates of the residues and the error terms, we arrive at \begin{equation*}
    \frac{d}{d\tau}E(\tau)\leq -(0.16-C|\alpha-1|^{1/2}|)E+CE^2+C|\alpha-1|\,.
\end{equation*}
Similar to Subsection \ref{nonlinearest}, we can perform the bootstrap argument to conclude finite-time blowup.
 \subsection{Estimates in higher-order Sobolev norms and convergence to steady state}
 Following the ideas in Subsection \ref{h2sec}, we can perform estimates in higher-order Sobolev norms and then close the estimates to establish  convergence to a steady state. We use the same energy $K$ and only sketch the main steps here. We first have from Subsection \ref{h2sec} the estimates of $L_i$ and $N_i$ and obtain
$$\begin{aligned}
\frac{1}{2}\frac{d}{d\tau}K^2(\tau)&\leq -K^2+CEK+CEK^2+((R_{1,\alpha})_{xx},\sin^2x u_{xx}\rho)\\&+((R_{2,\alpha})_x,\sin^2x\omega_x \rho)+((F_{1,\alpha})_{xx},\sin^2xu_{xx}\rho)+((F_{2,\alpha})_x,\sin^2x\omega_x\rho)\,.
\end{aligned}$$

By Lemma \ref{residue-lemma}, $\sin x\partial x u_{\text{res}}, \sin x\partial x \omega_{\text{res}}$ shares the same pointwise estimates as $\ u_{\text{res}},  \omega_{\text{res}}$. We can obtain, similar to the estimates in $E$, the estimates $$((R_{1,\alpha})_{xx},\sin^2x u_{xx}\rho)\lesssim|\alpha-1|(E+K)K+K\|[u_{\text{res},xx}(\psi-\sin x\psi_x(0))]_x\sin x\|_{\rho}\,.$$
We further obtain the following estimate $$\begin{aligned}
    \|[u_{\text{res},xx}(\psi-\sin x\psi_x(0))]_x\sin x\|_{\rho}&\lesssim |\alpha-1|E+\|u_{\text{res},xx}(\psi_x-\cos x\psi_x(0))\sin x\|_{\rho}\\&\lesssim |\alpha-1|E+|\alpha-1|\|(\psi_x-\psi_x(0))/x\|_{\infty}\,.
\end{aligned}$$
Finally by the Cauchy-Schwarz inequality,
we have 
\begin{equation}
    \label{cs-al}|(\psi_x-\psi_x(0))/x|\leq\int_{0}^x|\omega(y)/y|dy\lesssim\|\omega/x\|_{2}\lesssim\|\omega\|_{\rho}\lesssim E\,,
\end{equation}
and therefore we conclude $$((R_{1,\alpha})_{xx},\sin^2x u_{xx}\rho)\lesssim|\alpha-1|(E+K)K\,.$$
By similar computations, we have 
$$((R_{2,\alpha})_{x},\sin^2x \omega_{x}\rho)\lesssim|\alpha-1|(E+K)K\,.$$

And for the residue terms, we use similar estimates to obtain
$$\|(F_{2,\alpha})_x\sin x\|_{\rho}\lesssim|\alpha-1|\,,$$
$$  \|(F_{1,\alpha})_{xx}\sin x\|_{\rho}\lesssim|\alpha-1|+|\alpha-1|\|(\frac{\psi_{\text{res}}}{\sin x})_x\rho^{1/2}\sin x\|_{\infty}\;.$$
We can use the triangular inequality to estimate $$|(\frac{\psi_{\text{res}}}{\sin x})_x\rho^{1/2}\sin x|\leq |[\psi_{\text{res},x}(0)-\frac{\psi_\text{res}}{\sin x}]/x|+|(\psi_{\text{res},x}-\psi_{\text{res},x}(0))/x|+|\frac{1-\cos x}{x}\frac{\psi_\text{res}}{\sin x}|\,.$$
Combined with the estimate similar to that in \eqref{important} and \eqref{cs-al}, we conclude $$  \|(F_{1,\alpha})_{xx}\sin x\|_{\rho}\lesssim|\alpha-1|\,.$$

We can finally obtain the same estimate as in Subsection \ref{h2sec} 
$$\frac{d}{d\tau}K(\tau)\leq -(1-C|a-1|)K+CE+C|a-1|+CEK\,.$$
We can again find an absolute constant $\mu_\alpha>1$ such that $$\frac{d}{d\tau}(K+\mu_\alpha E)\leq-(0.1-C|a-1|)(K+\mu_\alpha E)+C|a-1|+C(K+\mu_\alpha E)^2\,.$$
And we can use the bootstrap argument on this higher-order energy $K+\mu_\alpha E$ to conclude a priori estimate in this norm. Now we can perform weighted estimates in time using the energy $J$ as in Subsection \ref{sec ct}, where the linear estimates follow from the linear estimates of $E$, the nonlinear estimates follow from Subsection \ref{sec ct}, and the error term vanishes under time-differentiation. We can obtain the same estimates of $J$ and establish exponential convergence of $J$ to zero. Then we use the argument in \cite{chen2021finite,chen2021slightly} to establish exponential convergence to the steady state and conclude the proof of Theorem \ref{t2}.
 \section{Blowup of the viscous model with weak advection}\label{sec5}
In this section, we follow the strategy of the linear and nonlinear estimates of the weak advection model and establish blowup  for the weak advection viscous model with $a<1$.   Intuitively, the viscosity term is small in the dynamic rescaling formulation and nonlinear stability can be closed using a higher-order norm, where the viscosity term has a stability effect. Therefore we expect that the viscous weak advection model develops a finite time singularity as well. 
\subsection{Dynamic rescaling formulation}
We recall the weak advection  model with viscosity.
\begin{equation}
\label{1dwpv}
\begin{aligned}
u_{ t}+2 a\psi u_{ z} &=2 u \psi_{ z}+\nu u_{zz}\,, \\
\omega_{ t}+2 a\psi\omega_{ z} &=\left(u^{2}\right)_{z}+\nu \omega_{zz}\,, \\
- \psi_{zz} &=\omega\,.
\end{aligned}
\end{equation}
We use  the rescaled variables $$\tilde{u}(x, \tau)=C_{u}(\tau)  u( x, t(\tau))\,,\quad \tilde{\omega}(x, \tau)=C_{u}(\tau)  \omega( x, t(\tau))\,,\quad \tilde{\psi}(x, \tau)=C_{u}(\tau)  \psi(x, t(\tau))\,,$$ where $$
C_{u}(\tau)=C_{u}(0)\exp \left(\int_{0}^{\tau} c_{u}(s) d s\right)\,, \quad t(\tau)=\int_{0}^{\tau} C_{u}(s) d s\,.$$ For solutions to \eqref{1dwpv}, the rescaled variables satisfy the dynamic rescaling equation 
\begin{equation}
\label{1drfv}
\begin{aligned}
\tilde{u}_{\tau}+2a \tilde{\psi}\tilde{u}_{ x} &=2 \tilde{u} \tilde{\psi}_{ x}+c_u \tilde{u}+\nu C_u(\tau)u_{xx}\,, \\
\tilde{\omega}_{ \tau}+2a \tilde{\psi}\tilde{\omega}_{ x} &=\left(\tilde{u}^{2}\right)_{x}+c_{u}\tilde{\omega}+\nu C_u(\tau) \omega_{xx}\,, \\
-\tilde{\psi}_{xx} &=\tilde{\omega}\,.
\end{aligned}
\end{equation}
\begin{remark}
    Different from the rescaling in the inviscid case, we introduce an extra degree of freedom: the constant $C_u(0)$. We will choose it later to ensure that the viscous term has a relatively small scaling compared to the main terms.
\end{remark}
In order to establish a finite time blowup, it suffices to prove the dynamic stability of \eqref{1drfv} with scaling parameter $c_u<-\epsilon<0$ for all time; see also \cite{chen2021finite}.
As before, we will primarily work in the dynamic rescaling formulation and use the notations $\tilde{u}=\bar{u} +u$, where $\bar{u}$ is an approximation steady state and $u$ is the perturbation that we will control in time.  

We consider the following approximate steady state.
$$\bar{\omega}=\bar{u}=\bar{\psi}=\sin x\,,\quad \bar{c}_u(\tau)=2(a-1)\bar{\psi}_x(0)-\nu C_u(\tau)\bar{u}_{xxx}(0)/\bar{u}_{x}(0)=2(a-1)+\nu C_u(\tau)\,.$$
We consider odd perturbations $u$, $\omega$, $\psi$. The odd symmetry of the solution is preserved in time by equation \eqref{1drfv}.  We use the following normalization condition: $c_u=2(a-1){\psi}_x(0)-\nu C_u(\tau)u_{xxx}(0)$. This normalization ensures that $u_x(0)$ remains $0$ if the initial perturbation satisfies $u_x(0,0)=0$. In fact, if  $u_x(\tau,0)=0$, then we obtain
$$\begin{aligned}
    \frac{d}{d\tau}u_x(\tau,0)&=\frac{d}{d\tau}(u_x(\tau,0)+\bar{u}_x(\tau,0))=(2-2a)(u_x(\tau,0)+\bar{u}_x(\tau,0))(\psi_x(\tau,0)+\bar{\psi}_x(\tau,0))\\&+(c_u+\bar{c}_u)(u_x(\tau,0)+\bar{u}_x(\tau,0))+\nu C_u(\tau)(\bar{u}_{xxx}(0)+{u}_{xxx}(0))=0\,.\end{aligned}$$

This particular choice of the approximate steady state and the normalization conditions ensure that $u_x(0,\tau)=0$ for all time provided that the initial perturbation satisfies $u_x(0,0)=0$. We will perform the same weighted norm estimate in the singular weight $\rho$ and the weighted norm $E$ as in the inviscid case. 
 
       \subsection{Estimates of the viscous terms}
    Now the perturbation satisfies 
     \begin{equation}
\label{1drcv}
\begin{aligned}
{u}_{ \tau} &=L_1+(a-1)L'_1+N_1+F_1+\nu C_u(\tau)V^u\,, \\
{\omega}_{ \tau}&=L_2+(a-1)L'_2+N_2+F_2+\nu C_u(\tau)V^\omega\,, \\
-{\psi}_{xx} &={\omega}\,.
\end{aligned}
\end{equation}
Here the terms $V^u$ and $V^\omega$ correspond  to all of the terms containing the effect of the viscosity and we factor out explicitly the small factor $\nu C_u(\tau)$ for a fixed $\nu$. $$V^u=u_{xx}+\bar{u}_{xx}+(1-u_{xxx}(0))(u+\bar{u})=u_{xx}-u_{xxx}(0)\sin x +(1-u_{xxx}(0))u\,,$$$$V^\omega=\omega_{xx}-\omega_{xxx}(0)\sin x +(1-\omega_{xxx}(0))\omega\,.$$
    We invoke the  nonlinear estimates in the inviscid case and obtain for the viscous model:
\begin{equation}
\label{l2-unvis}
    \frac{1}{2}\frac{d}{d\tau}E^2(\tau)\leq -(0.16-C|a-1|)E^2+C|a-1|E+CE^3+\nu C_u(\tau)[((V^u)_x,u_x\rho)+(V^\omega,\omega\rho)]\,.
\end{equation}
    We estimate the viscous terms carefully since they involve singular weights. $$
   ((V^u)_x,u_x\rho)=(u_{xxx}-u_{xxx}(0),u_x{\rho})+u_{xxx}(0)(1-\cos x,u_x{\rho})+(1-u_{xxx}(0))\|u_x\|^2_{\rho}\,.
    $$
    Notice that $$|\rho_x|=\rho \left |\frac{-\sin x}{1-\cos x}\right | \lesssim \rho |\frac{1}{x}|\,, \quad |\rho_{xx}|=\rho \left |\frac{-\cos x}{1-\cos x}+\frac{2\sin^2 x}{(1-\cos x)^2} \right |\lesssim \rho |x^{-2}|$$ are singular near the origin and are smooth elsewhere. We can
use integration by parts twice to compute $$\begin{aligned}(u_{xxx}-u_{xxx}(0),u_x{\rho}&)=-(u_{xx}-xu_{xxx}(0),u_{xx}{\rho})-(u_{xx}-xu_{xxx}(0),\frac{x^2}{2}u_{xxx}(0){\rho_x})\\&-(u_{xx}-xu_{xxx}(0),(u_{x}-\frac{x^2}{2}u_{xxx}(0)){\rho_x})\\&\leq -\|u_{xx}\|^2_{\rho}+C[|u_{xxx}(0)|\|u_{xx}\|_{\rho}+|u_{xxx}(0)|^2+\|\frac{u_x}{x}-\frac{x}{2}u_{xxx}(0)\|_{\rho}^2]\\&\leq -\frac{1}{2}\|u_{xx}\|^2_{\rho}+C|u_{xxx}(0)|^2+C\|\frac{u_x}{x}\|_{\rho}^2\,,\end{aligned}$$
where for  the last inequality we use the weighted AM-GM inequality $ab\leq \epsilon a^2+\frac{1}{4\epsilon}b^2$ for a very small constant $\epsilon$.
Therefore we get \begin{equation}\label{visu-l2}
    ((V^u)_x,u_x\rho)\leq -\frac{1}{2}\|u_{xx}\|^2_{\rho}+C[|u_{xxx}(0)|^2+\|\frac{u_x}{x}\|_{\rho}^2 +(1+|u_{xxx}(0)|)E^2]\,.
\end{equation}
Similarly, we estimate via integration by parts$$\begin{aligned}(\omega_{xx},\omega{\rho})&=-(\omega_{x}-\omega_{x}(0),(\omega_{x}-\omega_{x}(0)){\rho})-(\omega_{x}-\omega_{x}(0),\omega_{x}(0){\rho})\\&-(\omega_{x}-\omega_{x}(0),x\omega_{x}(0){\rho_x})-(\omega_{x}-\omega_{x}(0),(\omega-x\omega_{x}(0)){\rho_x})\\&\leq-\|\omega_{x}-\omega_{x}(0)\|^2_{\rho}+C[|\omega_{x}(0)|\|\frac{\omega_{x}-\omega_{x}(0)}{x}\|_{\rho}+\|\frac{\omega}{x}-\omega_{x}(0)\|_{\rho}^2]\,.\end{aligned}$$
And we obtain
\begin{equation}\label{visw-l2}\begin{aligned}(V^\omega,\omega\rho)
   &\leq -\|\omega_{x}-\omega_{x}(0)\|^2_{\rho}+C[|\omega_{x}(0)|\|\frac{\omega_{x}-\omega_{x}(0)}{x}\|_{\rho}\\&+\|\frac{\omega}{x}-\omega_{x}(0)\|_{\rho}^2+|\omega_{xxx}(0)|^2 +(1+|\omega_{xxx}(0)|)E^2]\,.
   \end{aligned}
\end{equation}
    

   The essential difficulty for the viscous terms is that after integration by parts, the singular weight produces various positive terms, on top of the damping terms $-\|u_{xx}\|_{\rho}$; see \eqref{visu-l2}, \eqref{visw-l2}. Fortunately, the positive terms contribute only to higher-order terms near the origin.
   
   Consider the interval $I=[-\pi/2,\pi/2]$. $\rho$ and $|1/x|$ are upper bounded by a positive constant outside of the interval and we have $$\|\frac{u_x}{x}\|_{\rho}^2\lesssim \|{u_x}\|_{\rho}^2+\|\frac{u_x}{x^2}\|_{L^2(I)}^2\lesssim E^2+\|u_{xxx}\|_{L^\infty(I)}^2\,,$$
   $$\|\frac{\omega}{x}-\omega_{x}(0)\|_{\rho}^2\lesssim \|\omega-x\omega_{x}(0)\|_{\rho}^2+\|\frac{\omega}{x^2}-\omega_{x}(0)/x\|_{L^2(I)}^2\lesssim E^2+|\omega_{x}(0)|^2+\|\omega_{xx}\|_{L^\infty(I)}^2\,,$$
   $$\|\frac{\omega_{x}-\omega_{x}(0)}{x}\|_{\rho}\lesssim\|{\omega_{x}-\omega_{x}(0)}\|_{\rho}+\|\frac{\omega_{x}-\omega_{x}(0)}{x^2}\|_{L^2(I)}\lesssim\|{\omega_{x}-\omega_{x}(0)}\|_{\rho}+\|\omega_{xxx}\|_{L^\infty(I)}\,.$$
   Plugging these estimates into the \eqref{visu-l2}, \eqref{visw-l2} and using again the weighted AM-GM inequality, we can yield 
   \begin{equation}\label{visuw-l2}(V^\omega,\omega\rho)+((V^u)_x,u\rho)
   \leq -\frac{1}{2}(\|\omega_{x}-\omega_{x}(0)\|^2_{\rho}+\|u_{xx}\|^2_{\rho})+C[E_V^2 +(1+E_V)E^2]\,, 
\end{equation}
where $$E_V=\|\omega_{xxx}\|_{L^\infty(I)}+\|u_{xxx}\|_{L^\infty(I)}+|\omega_{x}(0)|\,.$$

   \subsection{Estimates in a higher-order norm}
    We will use a weighted higher-order norm to close the estimates. To have a good estimate in this higher-order norm, it needs to satisfy three criteria. First, we need to extract damping in the leading order linear term. Secondly, we need to bound the terms like $\omega_{xxx}(0)$ using interpolation between the lower and the higher-order norms via the Gagliardo-Nirenberg inequality; therefore it needs to be at least as strong as a regular higher-order norm near the origin. Thirdly, we need damping for the diffusion terms to close the estimates. This motivates us to choose a combination of the $k-$th order weighted norms for $k\geq1$:
    $$E_{k}^2(\tau)=(u^{(k+1)},u^{(k+1)}\rho_k)+(\omega^{(k)},\omega^{(k)}\rho_k)\,, \quad \rho_k=(1+\cos x)^k\;,$$
    where we use the notation that $f^{(k)}=\partial_x^k f$. We denote $E_0=E$ and $\rho_0=\rho$.
    \begin{remark}
    \label{remark rm}
        This weighted norm immediately satisfies criterion 2 and we will verify in the linear estimates that it satisfies criterion 1. Finally, a clever combination of the weighted norms can produce damping for the viscous terms and we make the damping terms in the estimates of the $(k-1)$-th order norms greater than the positive terms in the estimates of the $k$-th order norms. We will elaborate on those points and establish the nonlinear estimates.
    \end{remark}

    Now we can estimate $\frac{d}{d\tau}E_{k}(\tau)$ for $k>0$ as follows
    $$\begin{aligned}
    &\frac{1}{2}\frac{d}{d\tau}E_{k}^2(\tau)=(L_2^{(k)}+(a-1)(L_2')^{(k)}+N_2^{(k)}+F_2^{(k)}+\nu C_u(t)(V^\omega)^{(k)},\omega^{(k)}\rho_k)\\&+(L_1^{(k+1)}+(a-1)(L_1')^{(k+1)}+N_1^{(k+1)}+F_1^{(k+1)}+\nu C_u(t)(V^u)^{(k+1)},u^{(k+1)}\rho_k)\,,
    \end{aligned}$$ 
    where the parts $L_i,L_i',F_i,N_i$ are defined exactly the same as in the inviscid case. 

    We first look at the viscous terms.
     We have for example
    $$((V^u)^{(k+1)},u^{(k+1)}(1+\cos x)^k)\leq (u^{(k+3)},u^{(k+1)}(1+\cos x)^k)+CE_VE_k+(1+E_V)E_k^2\,.$$
    We use integration by parts twice to obtain
    $$\begin{aligned}
        &(u^{(k+3)},u^{(k+1)}(1+\cos x)^k)=-(u^{(k+2)},u^{(k+2)}(1+\cos x)^k-u^{(k+1)}k(1+\cos x)^{k-1}\sin x)\\&=-(u^{(k+2)},u^{(k+2)}(1+\cos x)^k)+\frac{1}{2}(u^{(k+1)},u^{(k+1)}k(1+\cos x)^{k-1}[(k-1)-k\cos x])\\&\leq -(u^{(k+2)},u^{(k+2)}(1+\cos x)^k)+C(k)(u^{(k+1)},u^{(k+1)}(1+\cos x)^{k-1})\,.
    \end{aligned}$$
    We can also get a similar bound for $V^\omega$. Therefore combined with the leading order estimate \eqref{visuw-l2} and using the idea in Remark \ref{remark rm}, we conclude that for small enough constants $0<\mu<\mu_0({k_0})<1$, we have the following viscous estimate \begin{equation}\label{viscous-final}
        \sum_{k=0}^{k_0}\mu^k[((V^u)^{(k+1)},u^{(k+1)}\rho_k)+((V^\omega)^{(k)},\omega^{(k)}\rho_k)]\leq C(k)[E_V^2+(1+E_V)\sum_{k=0}^{k_0}\mu^kE_k^2]\,. 
    \end{equation}
    Here $\mu_0(k_0)$ is a generic constant depending on $k_0$. We can choose $k_0$ large enough later so that $E_V$ can be bounded using the interpolation inequalities.
    
    Now we look at the linear terms and extract damping. We denote the terms as lower order terms (l.o.t. for short) if their $\rho_k$-weighted $L^2$-norms are bounded by $\sum_{i=0}^{k-1}E_i$. For the terms of intermediate order, since  $\rho_k\leq C(k)\rho_i$ for $i<k$, combined with the classical elliptic estimate,  we can show that $u^j$, $\psi^i$ for $0\leq j<k+1$ and $0\leq i<k+2$ are l.o.t.
    Using the l.o.t. notation, we keep track only of the higher-order terms $$(L_1^{(k+1)},u^{(k+1)}\rho_k)=(-2\sin x u^{(k+2)}-2k\cos x u^{(k+1)}+2\sin x \psi^{(k+2)}+\text{l.o.t.},u^{(k+1)}\rho_k)\,,$$
    $$(L_2^{(k)},\omega^{(k)}\rho_k)=(-2\sin x \omega^{(k+1)}-2k\cos x \omega^{(k)}+2\sin x u^{(k+1)}+\text{l.o.t.},\omega^{(k)}\rho_k)\,.$$
    Again we have a crucial cancellation of the cross terms and for the leading order terms we use integration by parts to obtain for example $$\begin{aligned}(-2\sin x u^{(k+2)}-2k\cos x u^{(k+1)},u^{(k+1)}\rho_k)&=(u^{(k+1)},u^{(k+1)}(-k-(k-1)\cos x)\rho_k)\\&\leq -(u^{(k+1)},u^{(k+1)}\rho_k)\,.\end{aligned}$$
    Therefore we derive the following estimate $$(L_1^{(k+1)},u^{(k+1)}\rho_k)+(L_2^{(k)},\omega^{(k)}\rho_k)\leq -E_k^2+C(k)\sum_{i=0}^{k-1}E_i E_k\leq -\frac{1}{2}E_k^2+C(k)\sum_{i=0}^{k-1}E_i^2\,.$$
    
    Similarly we have $$(L_1'^{(k+1)},u^{(k+1)}\rho_k)+(L_2'^{(k)},\omega^{(k)}\rho_k)\leq 2E_k^2+C(k)\sum_{i=0}^{k-1}E_i^2\,.$$
    
    We have the trivial bound for the error term 
    $$(F_1^{(k+1)},u^{(k+1)}\rho_k)+(F_2^{(k)},\omega^{(k)}\rho_k)\leq C(k) (a-1)E_k\,.$$
    
    The nonlinear terms are more subtle. We will show that \begin{equation}\label{nonlinea-f}
        (N_1^{(k+1)},u^{(k+1)}\rho_k)\leq  C(k)\sum_{i=0}^{k}E_i^2E_k\,,
    \end{equation}
    and we can have the same bound for $\omega$.
    In fact, for a canonical term in $N_1^{(k+1)}$ and $N_2^{(k)}$, it is of the form $\psi^{(i)}u^{(k+2-i)}$ or $\psi^{(i)}\psi^{(k+3-i)}$ or $u^{(i)}u^{(k+1-i)}.$ For the terms $\psi u^{(k+2)}$ and $\psi \omega^{(k+1)}$, we can use integration by parts and Lemma \ref{lotb} to show that  $$(\psi u^{(k+2)},u^{(k+1)}(1+\cos x)^k)\leq C(k)(\|\psi_x\|_{\infty}+\|\frac{\psi}{\sin x}\|_{\infty})E_k^2\leq C(k)E E_k^2\,.$$
    The terms associated with $\psi_x u^{(k+1)}$, $\psi_x \omega^{(k)}$, $u u^{(k+1)}$, and $u \omega^{(k)}$  have the same bound trivially. We can then focus on controlling the weighted norms of $\omega^{(i)}u^{(k-i)}$, $\omega^{(i)}\omega^{(k-1-i)}$, $u^{(i+1)}u^{(k-i)}$ for indices $0<i<k$ to establish the bound \eqref{nonlinea-f}. For example, we get
    $$\|\omega^{(i)}u^{(k-i)}(1+\cos x)^{k/2}\|_{2}\leq \|\omega^{(i)}(1+\cos x)^{(i+1)/2}\|_{\infty}E_{k-1-i}\,.$$
    Finally, by the fundamental theorem of calculus, we can bound the $L^{\infty}$-norm by  $$\begin{aligned}
        & C(K)[\|\omega^{(i+1)}(1+\cos x)^{(i+1)/2}\|_{1}+\|\omega^{(i)}(1+\cos x)^{(i-1)/2}\sin x\|_{1}]\\&\leq C(K)[E_{i+1}+\|\omega^{(i)}(1+\cos x)^{(i)/2}\|_{1}]\leq C(k)\sum_{i=0}^{k}E_i\,.
    \end{aligned}
        $$
        Therefore we conclude that \eqref{nonlinea-f} holds.
\subsection{Collection of norms and finite time blowup}
We collect the bounds \eqref{viscous-final} \eqref{nonlinea-f} for viscous and nonlinear terms, along with the linear bounds and the leading order estimate \eqref{l2-unvis}. For any fixed ${k_0}$, there exists a small enough constant $0<\mu_1({k_0})<\mu_0({k_0})$, such that the following estimate holds
$$\frac{d}{d\tau}I_{k_0}^2\leq -(0.1-C|a-1|)I_{k_0}^2+C|a-1|I_{k_0}+CI_{k_0}^3+C\nu C_u(\tau)[E_V^2+(1+E_V)I_{k_0}^2]\,,$$
where the energy is defined as $$I_{k_0}^2=\sum_{k=0}^{k_0}\mu_1^k({k_0})E_k^2\,.$$
Here the constants depend on $k_0$ and $\mu$ but once we first prescribe $k_0$ then $\mu=\mu_1({k_0})$, they become just constants. We will later make our $C_u(\tau)$ and $|a-1|$ small to close the argument.

    Finally, by the Gagliardo-Nirenberg inequality, for $k=1,3$, we have $$\|\omega^{(k)}\|_{L^{\infty}(I)}\lesssim \|\omega^{(4)}\|^{\theta}_{L^{2}(I)}\|\omega\|^{1-\theta}_{L^{2}(I)}\,, \quad \theta=\frac{k+1/2}{4}\,.$$
    This is the classical Gagliardo-Nirenberg inequality applied to a bounded domain, and we can just use the extension technique to prove it; see for example \cite{li2021note}. We get similar bounds involving $u$ and  conclude that $E_V\lesssim I_{k_0}$, for any fixed $k_0\geq 4$. For example, we just take $k_0=4$ and obtain
    $$\frac{d}{d\tau}I_{4}\leq -(0.1-C|a-1|)I_{4}+C|a-1|+CI_{4}^2+C\nu C_u(\tau)(1+I_{4})I_{4}\,.$$

Now we choose $C_u(0)=|a-1|^2$ for $|a-1|<\delta$ with a small enough $\delta>0$. It is easy to check that the bootstrap argument for $I_{4}\leq C|a-1|$ and $C_u(\tau)\leq C_u(0)\exp((a-1)t)\leq C_u(0)$ will hold for all time provided that it holds initially. We again use the estimate for the normalization constants $$c_u+\bar{c}_u=2(a-1)+\nu C_u(t)(1-u_{xxx}(0))+2(a-1)\psi_x(0)<(a-1)<0\,.$$
    Thus we can obtain a blowup in finite time in the physical variables. 
\section{Appendix}
\label{sec app}
\begin{lemma}
\label{cap}
    Assume $\sum_{k\geq 1} a_k^2+c_k^2<\infty$, then we have the following inequality for \eqref{dample}
    $$\begin{aligned}
 F(a,c)&\coloneqq \sum_{k\geq1}\{ a_k^2(0.84+\frac{1}{k^2}-\frac{1}{(k-1)^2})+c_k^2(0.84+\frac{1}{k(k+1)})+ 2a_k a_{k+1}\frac{1}{(k+1)^2}\\&+2 a_k \sum_{j>k+1}a_j(\frac{1}{j^2}-\frac{1}{(j-1)^2})+2a_kc_k\frac{1+2k-k^2}{2k^2(k+1)}+2a_{k+1}c_k\frac{k^2-k-1}{2k^2(k+1)^2}\\&-2a_{k+2}c_k\frac{k+2}{2(k+1)^2}+\sum_{j>k}2a_k c_j\frac{1}{j(j+1)}\}\geq0\,.\end{aligned}$$
\end{lemma}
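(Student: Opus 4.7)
The plan is to exploit the fact that the diagonal coefficients approach $0.84$ while every off-diagonal kernel tends to zero as the index grows, so the quadratic form $F$ is ``diagonally dominant at infinity.'' I would choose a truncation level $N$ and write $F(a,c) = F_N(a,c) + R_N(a,c)$, where $F_N$ is the restriction of $F$ to the variables with index $\leq N$, and $R_N$ collects every monomial that contains at least one index exceeding $N$. The tail $R_N$ contains both pure-tail cross terms, such as $2 a_k a_{k+1}/(k+1)^2$ with $k > N$, and mixed low--tail terms, such as $2 a_k c_j/(j(j+1))$ with $k \leq N < j$. Both types will be controlled by weighted arithmetic-geometric mean inequalities.

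Applying $2|xy| \leq \varepsilon x^2 + y^2/\varepsilon$ to each cross term, one accumulates a total amount $\alpha_k$ of ``borrowed'' quadratic mass on each tail diagonal $a_k^2$, and analogously for $c_k^2$. Because every off-diagonal kernel decays at least like $1/k$, and the slowest ones, $2 a_k c_k (1+2k-k^2)/(2k^2(k+1))$ and $-2 a_{k+2} c_k (k+2)/(2(k+1)^2)$, are individually bounded by a small constant uniformly in $k \geq N$, the totals $\alpha_k$ can be made arbitrarily small uniformly in $k > N$ by choosing $N$ large and distributing the $\varepsilon$ weights appropriately. This leaves the tail with a diagonal coefficient at least $0.83$, which is positive; hence the modified $R_N$ is non-negative.

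What remains is a finite-dimensional quadratic form $\tilde F_N$ on $\mathbb{R}^{2N}$---the original $F_N$ augmented by the low-frequency quadratic mass absorbed during the Cauchy--Schwarz step. I would realize $\tilde F_N$ as a symmetric $2N \times 2N$ matrix $M_N$ and verify $M_N \succ 0$ rigorously on a computer, either by an interval-arithmetic eigenvalue enclosure or by an interval Cholesky factorization. This computer-assisted check is precisely the ``rigorous computer-assisted bounds'' the introduction promises to relegate to the Appendix.

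The main obstacle is the tuning of parameters: $N$ must be large enough that every borrowed mass $\alpha_k$ is safely below $0.84$, yet small enough that the $2N \times 2N$ rigorous computation is tractable and that $M_N$ remains strictly positive-definite even after absorbing the Cauchy--Schwarz residues from couplings that straddle the cutoff. The slowest-decaying couplings, those of order $1/k$, force non-trivial borrowing near $k = N$, and balancing this against positive-definiteness of the finite block is the delicate bookkeeping step where a numerical search for good weights is essential before the rigorous verification.
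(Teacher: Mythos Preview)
Your proposal is correct and follows essentially the same route as the paper: truncate at a finite level $N$, control every cross term involving an index larger than $N$ by AM--GM so that the tail retains a positive diagonal while only a small multiple of $\sum_{k\le N}(a_k^2+c_k^2)$ is borrowed from the low modes, and then verify positive definiteness of the resulting finite $2N\times 2N$ symmetric matrix by a rigorous interval-arithmetic computation. The paper implements this with $N=200$, the unweighted bound $2ab\ge -(a^2+b^2)$, and an approximate SVD plus interval residual check to certify the smallest eigenvalue of the finite block exceeds $0.01$, which dominates the $2/N$ loss from the borrowed mass.
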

\begin{proof}
    Denote the summation of terms in  $F(a,c)$ that only involve $a_i$, $c_j$ for $i,j\leq N$ as $F_N(a,c)$. Here $N=200$. This quadratic form $F_N(a,c)$ can be expressed as $a^{(N),T}F^{(N)}c^{(N)}$, where $a^{(N)}, c^{(N)}$ are two vectors with entries $a_i$,$c_j$ respectively and $F^{(N)}$ is a symmetric matrix. We numerically verify using interval arithmetic in Matlab that the smallest eigenvalue of $F^{(N)}$ is greater than $0.01$; see remarks after the proof for details. Therefore we have 
    $$F_N(a,c)\geq0.01\sum_{k=1 }^N (a_k^2+c_k^2)\,.$$

For the remainder $F(a,c)-F_N(a,c)$, we estimate it term by term via the trivial bound $2ab\geq-(a^2+b^2)$ and obtain
    $$\begin{aligned}&F(a,c)-F_N(a,c)\geq\sum_{k>N}[ a_k^2(0.84+\frac{1}{k^2}-\frac{1}{(k-1)^2})+c_k^2(0.84+\frac{1}{k(k+1)})] \\&+\sum_{k=1}^{ N} a_k^2(-\frac{2}{N^2}-\frac{1}{N+1}) +\sum_{k=N-1}^{ N} c_k^2(-\frac{N^2-N-1}{2N^2(N+1)^2}-\frac{N+1}{2N^2}) \\&+\sum_{k> N} a_k^2(-\frac{3}{N^2}-N(\frac{1}{(N)^2}-\frac{1}{(N+1)^2})-\frac{N^2-N-1}{2N^2(N+1)^2}-\frac{N+1}{2N^2}-\frac{N^2-2N-1}{2N^2(N+1)}\\&-\frac{1}{N+2})+ \sum_{k> N} c_k^2(-\frac{N^2-N-1}{2N^2(N+1)^2}-\frac{N+1}{2N^2}-\frac{N^2-2N-1}{2N^2(N+1)}-\frac{1}{N+2})\,. \end{aligned}$$
For $N=200$, we estimate all of the coefficients by a lower bound and obtain
    $$F(a,c)-F_N(a,c)\geq -\frac{2}{N}\sum_{k=1}^{ N} (a_k^2+c_k^2)+(0.84-\frac{3}{N})\sum_{k> N} (a_k^2+c_k^2)\geq-\frac{2}{N}\sum_{k=1}^{ N} (a_k^2+c_k^2)\,.$$
    Therefore we conclude $F(a,c)\geq 0$.
\end{proof}
\begin{remark}
    We now explain how to verify that the smallest eigenvalue of the symmetric matrix $F^{(200)}$ is greater than $0.01$. We proceed in three steps.
    \begin{enumerate}
        \item We first use Matlab to perform an (approximate) SVD decomposition of $$F^{(200)}-0.011I\approx VDV'\,.$$
        Here $D$ is the diagonal matrix consisting of (approximate) eigenvalues of $F^{(200)}-0.011I$, and $V$ is the unitary matrix consisting of (approximate) eigenvectors of $F^{(200)}-0.011I$.
        \item We use interval arithmetic to verify that the maximal absolute value of entries of $F^{(200)}-0.011I- VDV'$ is at most $10^{-10}$. Therefore the spectral norm of $F^{(200)}-0.011I- VDV'$, which is bounded by its $1$-norm, is rigorously bounded from above by $200 \times 10^{-10}$.
        \item Since $D$ has positive entries, we know that $VDV'$ is positive definite. We  conclude that  $$F^{(200)}-0.01I=VDV'+0.001I+F^{(200)}-0.011I- VDV'$$ is positive definite.
    \end{enumerate}
\end{remark}

\vspace{0.2in}
{\bf Acknowledgments.} The research was in part supported by NSF Grant DMS-2205590 and the Choi Family Gift Fund and the Graduate Fellowship Fund. We would like to thank Dr. Jiajie Chen for some stimulating discussions, especially on the design of the energy norm. {We also thank Changhe Yang for some valuable discussions in the early stage of this project.}

\bibliographystyle{abbrv}
\bibliography{ref}
\end{document}